\definecolor{violet}{rgb}{0.0,0.2,0.7}
\definecolor{rouge2}{rgb}{0.8,0.0,0.2}
\patchcmd{\thebibliography}{*}{}{}{}
\pretocmd\thebibliography{\csname c@secnumdepth\endcsname=-2 }{}{}
\numberwithin{figure}{section}
\numberwithin{equation}{section}
\DeclareMathOperator{\codim}{codim}
\DeclareMathOperator{\Gal}{Gal}
\DeclareMathOperator{\rank}{rank}
\DeclareMathOperator{\reg}{reg}
\DeclareMathOperator{\sing}{sing}
\DeclareMathOperator{\supp}{Supp}
\DeclareMathOperator{\A}{A}
\DeclareMathOperator{\orb}{orb}
\DeclareMathOperator{\Neron}{N}
\DeclareMathOperator{\red}{red}
\newcommand{\CC}{\mathbb{C}}
\newcommand{\Q}{\mathbb{Q}}
\newcommand{\N}{\mathbb{N}}
\renewcommand{\d}{\partial}
\newcommand{\vp}{\varphi}
\renewcommand{\O}{\mathcal{O}}
\newcommand{\ep}{\varepsilon}
\renewcommand{\epsilon}{\varepsilon}
\newcommand{\la}{\langle}
\newcommand{\ra}{\rangle}
\renewcommand{\ge}{\geqslant}
\renewcommand{\le}{\leqslant}
\renewcommand{\leq}{\leqslant}
\renewcommand{\geq}{\geqslant}
\newcommand{\Ric}{\mathrm{Ric} \,}
\newcommand{\om}{\omega}
\newcommand{\ddc}{dd^c}
\newcommand{\ce}{\chi_{\eta}}
\newcommand{\sN}{\mathscr{N}}
\newcommand{\E}{\mathscr{E}}
\newcommand{\F}{\mathscr{F}}
\newcommand{\sH}{\mathscr{H}}
\newcommand{\sL}{\mathscr{L}}
\newcommand{\sW}{\mathscr{W}}
\newcommand{\Supp}{\mathrm {Supp}}
\newcommand{\tr}{\mathrm{tr}}
\newcommand{\sT}{\mathscr{T}}
\renewcommand{\F}{\mathscr{F}}
\newcommand{\G}{\mathscr{G}}
\newcommand{\sA}{\mathscr{A}}
\newcommand{\ud}{|u|^2+\lambda^2}
\newcommand{\omte}{\om_{t,\ep}}
\newcommand{\omx}{\om_{\wt Y}}
\newcommand{\wt}{\widetilde}
\newcommand{\omxt}{\om_{\wt X}}
\theoremstyle{plain}
\newtheorem{theorem}{Theorem}[section]
\newtheorem{propo}[theorem]{Proposition}
\newtheorem{lemma}[theorem]{Lemma}
\newtheorem{defin}[theorem]{Definition}
\theoremstyle{definition}
\newtheorem{example}[theorem]{Example}
\newtheorem{remark}[theorem]{Remark}
\newtheorem{notation}[theorem]{Notation}
\newtheorem{claim}[theorem]{Claim}
\newtheorem{set-up}[theorem]{Set-up}
\newtheorem{subclaim}[theorem]{Subclaim}
\newtheorem{assumption}{Assumption}
\newtheorem*{assumptionS}{Assumption}
\setlist[enumerate]{label=(\thetheorem.\arabic*), before={\setcounter{enumi}{\value{equation}}}, after={\setcounter{equation}{\value{enumi}}}}
\theoremstyle{plain}
\newtheorem{bigthm}{Theorem}
\begin{document}

\begin{abstract}
After establishing suitable notions of stability and Chern classes 
for singular pairs, we use K\"ahler-Einstein metrics with conical and 
cuspidal singularities to prove the slope semistability of orbifold 
tangent sheaves of minimal log canonical pairs of log general type. 
We then proceed to prove the Miyaoka-Yau inequality for all 
minimal pairs with standard coefficients. Our result in particular provides 
an alternative proof of the Abundance theorem for threefolds 
that is independent 
of positivity results for cotangent sheaves established by Miyaoka.
\end{abstract}

\title[Stability and Miyaoka-Yau inequality]{Orbifold Stability and Miyaoka-Yau Inequality for minimal pairs}

\date{\today}
\author{Henri Guenancia}
\address{Institut de Mathématiques de Toulouse; UMR 5219, Université de Toulouse; CNRS, UPS, 118 route de Narbonne, F-31062 Toulouse Cedex 9, France}
\email{henri.guenancia@math.cnrs.fr }
\urladdr{https://hguenancia.perso.math.cnrs.fr/}
\author{Behrouz Taji}
\address{School of Mathematics and Statistics F07, The University of Sydney, NSW 2006, Australia}
\email{behrouz.taji@sydney.edu.au} 
\urladdr{http://www.maths.usyd.edu.au/u/behrouzt/}

\thanks{Henri Guenancia is partially supported by NSF Grant DMS-1510214. Behrouz Taji was partially supported by the DFG-Graduiertenkolleg GK1821 “Cohomological Methods in Geometry” at Freiburg.}

\maketitle

\tableofcontents

\section{Introduction}
\label{section1-Intro}

In 1954 Calabi conjectured that a compact complex manifold with negative first Chern class
$c_1(X)<0$ admits a K\"ahler-Einstein metric. This conjecture was famously settled by 
Aubin \cite{Aubin} and Yau \cite{Yau78} leading to many remarkable applications in algebraic geometry for those manifolds. An 
important consequence was the celebrated Miyaoka-Yau inequality (cf. \cite{Yau77}): 

\begin{equation}\label{eq:Yau1}
\bigl( 2(n+1)\cdot c_2(X) - n\cdot c_1^2(X)\bigr) \cdot (-c_1(X))^{n-2} \geq 0,  
\end{equation} 
where $n=\dim (X)$. 

The first main result of the current paper is the generalization of Inequality~\eqref{eq:Yau1} 
to the case of all minimal models. 

 \begin{bigthm}
  \label{thm:1stMY}
 \emph{ \textbf{(The Miyaoka-Yau inequality for minimal models)} }
  
  \noindent
Any minimal model of dimension $n$ verifies the Miyaoka-Yau inequality: 
   \begin{equation}\label{MY-ineq}
    \bigl( 2(n+1)\cdot c_2(X) - n\cdot c_1^2(X) \bigr) \cdot (-c_1(X))^{n-2}\geq 0, 
   \end{equation}
 \end{bigthm}
 
 A minimal model $X$ is a normal complex projective variety with only terminal singularities 
whose canonical divisor is a $\Q$-Cartier nef divisor. 
According to standard results and conjectures in the Minimal Model Program minimal 
varieties exist, at least conjecturally, in the birational class of any non-uniruled projective
manifold. In this context, thanks to his celebrated result on the so-called generic 
semi-positivity of cotangent sheaves, Miyaoka proved that the inequality 

\begin{equation}\label{ineq:MiyOriginal}
\bigl(  3c_2(X)  - c_1^2(X) \bigr) \cdot H^{n-2}  \geq 0,
\end{equation}
holds, for any ample divisors $H\subset X$, cf.~\cite{Miyaoka}. In this light, the inequality~\eqref{MY-ineq}
can be seen as bridging the gap between the two inequalities of Miyaoka (\ref{ineq:MiyOriginal}) 
and Yau (\ref{eq:Yau1}),
when the polarization is chosen to be the canonical one. 

More generally, minimal models are studied in the setting of pairs $(X,D)$
where a projective variety $X$ is considered together with a divisor $D$
such that $K_X+D$ is nef and $(X,D)$ has only ``mild" singularities. 
Naturally one would like to generalize the inequality~\eqref{eq:Yau1} in this setting. 

Generalization of Miyaoka-Yau inequalities have attracted a lot of attention over the last thirty years, with major contributions due to Tsuji, R. Kobayashi, Tian-Yau, Simpson, Megyesi, Y. Zhang, Song-Wang, Greb, Kebekus, Peternell together with the second author, 
to cite only a few. We have tried to render a brief account of these contributions in the last section of the introduction. 

One of the remaining cases of interest for this inequality is that of log canonical pairs $(X,D)$ where $K_X+D$ is nef. 
Here the situation gets significantly more complicated; even when defining the correct notion for
Chern classes. In fact in the most general setting, where all possible rational coefficients for 
$D$ are allowed, 
it is not even clear how one should define higher Chern classes. 
However, under the assumption of Theorem~\ref{thm:main-ineq} below (which seem to be somehow the maximally singular cases where orbifold Chern classes can be defined, cf the remarks under the aforementioned theorem), there exists a Zariski open subset 
$X^\circ$ with $\codim_{X}(X\backslash X^\circ)\geq 3$ 
where it is possible to find a collection of local smooth charts encoding the structure of 
the boundary $D$. Then, one can follow constructions similar to those of Mumford~\cite{MR717614} for 
$\mathcal Q$-varieties to define $c_2(X,D)$ cycle theoretically. 
With this definition at hand, we prove the Miyaoka-Yau inequality for minimal dlt pairs whose boundary has standard coefficients:
\newpage

  \begin{bigthm}
  \label{thm:main-ineq}
\emph{  \textbf{(The Miyaoka-Yau inequality for minimal dlt pairs)}}
  
  \noindent
  Let $X$ be a normal, projective variety of dimension $n$ and let $D$ be an effective $\Q$-divisor with standard coefficients, i.e. $D = \sum (1 -\frac{1}{n_i})\cdot D_i$ with $n_i \in \mathbb N \cup \{\infty\}$. Assume that 
  \begin{enumerate}
  \item[$(i)$] Each component $D_i$ of $D$ is a $\Q$-Cartier divisor;
  \item[$(ii)$] The pair $(X,D)$ has dlt singularities;
  \item[$(iii)$] The $\Q$-line bundle $K_X+D$ is nef.
  \end{enumerate}   
Let $\nu$ denote the \emph{numerical Kodaira dimension} of $K_X+D$. Then, for any ample divisor $H$ in $X$, the inequality
  \begin{equation}\label{eq:MYIneqMain}
    \bigl( 2(n+1)\cdot c_2(X,D) - n\cdot c_1^2(X,D) \bigr) \cdot (K_X+D)^{i} \cdot H^{j}\geq 0.
    \end{equation}
  holds, where $i=\min(\nu,n-2)$ and $j=n-i-2$. 
  \end{bigthm}

 We follow the usual convention by setting ``$n_i =\infty$" when the expression $(1-\frac{1}{n_i})$ 
 is equal to $1$.
 Also recall that the numerical Kodaira dimension $\nu(B)$ of a nef $\Q$-divisor $B$ is defined 
 by $\nu(B):= \max \{ m\in \N \; \big| \; c_1(B)^m\neq 0    \} $. 
 For the definition of the various types of singularities that appear in this paper we refer to~\cite[\S 2.3]{KM}.
 We recall that klt singularities are dlt. 
 
 \subsection*{A few remarks about Theorem~\ref{thm:main-ineq}} We would like to detail a few points in Theorem~\ref{thm:main-ineq}.

$(a)$ First, the restriction on the singularities is essential to guarantee the existence of suitable covers, and 
consequently a good notion
for $c_2(X,D)$; we refer to Section~\ref{examples} for an in-depth discussion.   

$(b)$ We can extend Theorem~\ref{thm:main-ineq} to the following case:
\begin{enumerate}
\item[$\ast$] The pair $(X,D)$ is log smooth, where $D= \sum d_i\cdot D_i$,   $d_i\in [0,1]\cap \Q$.
\end{enumerate}

$(c)$ The strategy of the proof of Theorem~\ref{thm:main-ineq} relies, in a crucial way, on an approximation process which amounts to replacing $(X,D)$ by $(X,D+\frac 1m H)$ for some suitable ample divisor $H$, and then passing to the limit when $m \to \infty$. This approximation, detailed in Proposition~\ref{prop:OrbiLc} is required for the following two things.
 
\begin{enumerate}
\item[$\ast$] To be able to deal with the case where $K_X+D$ is nef but not big.

\item[$\ast\ast$] To clear the poles of the Higgs field in the last step of the proof: as the new pair is klt, the canonical Higgs field on the cover has no poles along the divisor (rather zeros), allowing us to apply Simpson's results on the Kobayashi-Hitchin correspondence. \\
\end{enumerate}

 The remark above highlights the importance of having a good (intrinsic) theory of orbifold sheaves (and related objects) in the case of boundary with \emph{arbitrary rational coefficients}, which forms the main bulk of the preliminary sections~\ref{section2-Prelim} 
and~\ref{section3-compatibility} of the current paper.  
These constructions provide sufficient flexibility that is crucial in dealing with the approximation process 
mentioned above. Indeed it is in this context that we establish 
slope semistability for the tangent sheaf of minimal models; a result that turns out to be an essential tool in proving Theorem~\ref{thm:main-ineq}.

\

\begin{bigthm}
 \label{thm:SS}
 \emph{\textbf{(Semistability of the orbifold tangent sheaf of minimal lc pairs).}}
 
 \noindent
 The orbifold tangent sheaf of any minimal log canonical pair of log general type $(X,D)$ is slope semistable 
 with respect to $K_X+D$.
 \end{bigthm}

The general strategy to prove Theorem~\ref{thm:SS} is inspired from \cite{CP2} and \cite{GSS}, and the main analytical input is the theory of \textit{conical/cuspidal} metrics, cf \S \ref{mixed}. These metrics are the logarithmic (or pair) analogue of Kähler metrics and they provide canonical \--though possibly singular\-- hermitian metrics on the orbifold tangent sheaf of a pair $(X,D)$, say when $K_X+D$ is ample. These metrics are the key to derive geometric properties of the orbifold tangent sheaf (like semistability) knowing only positivity/negativity properties of its determinant.
However, when the pair $(X,D)$ is singular, these metrics are unfortunately too singular to carry over the existing analysis in the (log) smooth case. So one needs to regularize the metrics on a resolution and control the resulting error terms. 

The control of the error term is given by Lemma~\ref{int:zero}; from a technical point of view, this constitutes the core of the proof. This lemma is the equivalent in this more general context of \cite[Lemma 3.7]{GSS}. However, the strategy of its proof is completely different and somewhat simpler. It relies on pluripotential theory and more specifically on the notion of strong convergence developed e.g. in \cite{BBEGZ}. We explain in Remark~\ref{remark} why a new strategy was actually needed. 

The last step of the proof of Theorem~\ref{thm:SS} is to relate the semistability of the orbifold tangent sheaf of a resolution to the one of $X$. This is a place where it is crucial to have defined the orbifold tangent sheaf in an intrinsic way (that is, not with any particular choice 
for a cover but rather the possibility to work with all adapted covers at the same time). \\

 \subsection*{Application to the Abundance Conjecture}
 The inequality~\eqref{eq:MYIneqMain} in dimension $2$ and in the smooth setting 
 was established by Miyaoka through purely 
 algebraic methods.
 In higher dimensions a weaker inequality was famously proved, again by 
 Miyaoka (\cite{Miyaoka}), via his work on generic semipositivity of the cotangent sheaves of minimal 
 models, an approach that heavily depends on sophisticated characteristic-$p$ arguments. 
 Miyaoka's result in dimension $3$, and its generalization by Megyesi (\cite[Chapt.~10]{Kollar92}), 
 namely the inequality 
 $$
 c_2(X,D) \cdot (K_X+D) \geq 0
 $$
 for a minimal lc pair $(X,D)$ with klt $X$, were fundamental to the proof of Abundance conjecture for threefold 
 cf.~\cite{Kaw91} and~\cite{Kollar92}. In this light, Theorem~\ref{thm:main-ineq} provides an alternative 
 way for proving the Abundance Conjecture (in dimension $3$), 
 that is independent of generic positivity results for cotangent sheaves of minimal models.

 \subsection*{Structure of the paper.}
$^{ }$

$\bullet$ Sections~\ref{section2-Prelim} 
and~\ref{section3-compatibility} provide the suitable algebraic framework to work with sheaves on pairs $(X,D)$, where $D$ has rational coefficients. Roughly speaking, every lc pair $(X,D)$, with $X$ being klt, has a natural structure of a smooth Deligne-Mumford stack in codimension two
(see Subsection~\ref{subsect:OrbiLc}). Such structures can then be 
endowed with linearized sheaves; the \emph{orbifold sheaves}. In particular, and inspired by the works of Campana, 
we can naturally define an orbifold tangent sheaf for the pair $(X,D)$ (Definition~\ref{def:adaptedsheaf}). 
The Chern classes of such orbifold sheaves can then be defined in an orbifold sense 
(see Section~\ref{subsect:OrbiChern}).

$\bullet$ In Section~\ref{section4-semistability}, after recalling the basic definitions about conical/cuspidal metrics, we then use 
the regularity results of \cite{GP} about conical/cuspidal Monge-Ampère equations 
to derive the semistability of the orbifold tangent sheaf of any minimal lc pair, in the spirit of \cite{GSS}. Even though the global approach is similar, one of the key estimates (Lemma \ref{int:zero}) requires a new input, cf Remark \ref{remark}.

$\bullet$ In Section~\ref{section5-MYInequality}, and by following a similar strategy to that of~\cite{GKPT15},
we prove Theorem~\ref{thm:main-ineq} using Theorem~\ref{thm:SS} to construct a 
a stable orbi-Higgs sheaf whose Bogomolov-Gieseker Chern class discriminant 
is equal to that of Miyaoka-Yau for the orbifold tangent sheaf of $(X,D)$.
At this point Simpson's result on the Kobayashi-Hitchin correspondence 
for Higgs bundles can be used to prove the Miyaoka-Yau inequality for $(X,D)$. \\

 \subsection*{Previously known results}
 As we explained above already, the Miyaoka-Yau inequality and its various generalizations have been 
 intensely studied. There are been different types of generalization so far: 

$\cdotp$ By relaxing the assumption on ampleness of $K_X$ and replacing it with $K_X$ nef and big. The first approach seems to be due to Tsuji \cite{Tsuji88} using orbifold metrics; later, Y. Zhang \cite{MR2497488} gave a proof using the Kähler-Ricci flow relying  on the scalar curvature bound of Z. Zhang \cite{ZhangZhou}. Finally, Song and Wang \cite{SW} used the regularity results of \cite{JMR} about conical metrics to reprove that inequality. The idea that conical metrics could be used to generalize Miyaoka-Yau inequality has been suggested by Tian \cite{Tian94} already twenty years ago. 

$\cdotp$ In the setting of log smooth pairs $(X,D)$ with standard coefficients, the Miyaoka-Yau inequality has been obtained by R. Kobayashi \cite{KobR} (assuming $D$ reduced) and Tian-Yau \cite{TY87} (in general). The proofs rely on the generalization of Aubin-Yau theorem in this setting, where the suitable geometry involves orbifold and cuspidal metrics. Using conical Kähler-Einstein metrics, Song-Wang \cite{SW} (partially) generalized these results to the case where $(X,D)$ is log smooth and $D$ is smooth with arbitrary real coefficients in $(0,1)$.

$\cdotp$ For singular klt surfaces, and more generally for log canonical pairs $(S,C)$ where $S$ is a klt surface and $C$ is a (reduced) curve such that $K_S+C$ is nef, the Miyaoka-Yau inequality was showed by Megyesi \cite[Chap. 10]{Kollar92}. 

$\cdotp$ In another direction, Simpson \cite{MR1179076} observed that Miyaoka-Yau inequality can be (almost formally) deduced from Bogomolov-Gieseker inequality for semistable bundles using the Higgs bundle $\Omega_X\oplus \mathcal O_X$. 

$\cdotp$ Using Simpson's much more robust approach and the first author's semistability result \cite{GSS}, Greb, Kebekus, Peternell and the second author have recently proved the Miyaoka-Yau inequality for projective minimal varieties of general type with klt singularities, cf. \cite{GKPT15}.

 \subsection*{Acknowledgements} 
 Both authors would like to thank Daniel Greb, Stefan Kebekus, Robert Lazarsfeld, Mihai P\u{a}un, Thomas Peternell
 and Jason Starr for helpful discussions. The authors also owe a debt of gratitude to the 
 anonymous referees for numerous helpful comments and for pointing out some errors in an earlier version 
 of this paper.

 \section{Preliminaries on orbifold sheaves, Chern classes and stability}
\label{section2-Prelim}

We begin the current section by reviewing some basic orbifold constructions.
These constructions will then be used in Subsection~\ref{subsect:OrbiChern} to introduce relevant notions 
of stability and Chern classes that are crucial for the rest of the paper. 

 \begin{defin}[Pairs]\label{def:pairs} 
 A pair $(X,D)$ consists of a normal quasi-projective variety $X$ 
 and a divisor $D=\sum d_i\cdot D_i$, where $d_i=(1-\frac{b_i}{a_i}) \in [0,1] \cap \Q$ with $a_i \in \N \cup\{+\infty\}$, $b_i\in \N$,  and each $D_i$ is prime. We say that $(X,D)$ has standard coefficients if $b_i=1$, for every $i$. In the following, $a_i$ and $b_i$ are always assumed to be relatively prime. 

 \end{defin}

 \begin{defin}[Pull-back of Weil divisors]\label{def:pull-back}
 Let $f:Y \to X$ be a finite and surjective morphism between normal 
 quasi-projective varieties $Y$ and $X$. For every Weil divisor $D\subset X$, we define the pull-back
 $f^*(D)$ by the Zariski-closure of $(f|_{f^{-1}(X_{\reg})})^*(D|_{X_{\reg}})$.
 \end{defin}

 \subsection{Adapted morphisms}
 We now recall a notion of morphism that encodes fractional structure in the boundary 
 divisor of a given pair. See for example~\cite[Sect.~2]{JK} or~\cite[Sect.~2.6]{CKT} for similar definitions
 and more examples.
  
 \begin{defin}[Adapted morphisms]\label{def:orbi-morph}
 Let $(X,D)$ be a pair as in Definition~\ref{def:pairs}. 
 A finite, Galois and surjective morphism $f:Y \to (X,D)$ is called 
 adapted to $(X,D)$ if 
 \begin{enumerate}
 \item The variety $Y$ is a normal and quasi-projective.
 \item  For every $D_i$, with $d_i \neq 1$, there exists $m_i\in \N$ and a reduced Weil divisor $D_i'\subset Y$ such that 
 $f^*(D_i)= (m_i a_i)\cdot D_i'$.
 \item The morphism $f$ is \'etale over the generic point of $\Supp(\lfloor D \rfloor )$.
 
 \end{enumerate}
 Furthermore, we say that $f$ is \emph{strictly adapted} if $m_i=1$, for all $i$.

 \end{defin}
 
 It will be important for subsequent constructions to work with adapted morphisms 
 that are, in codimension one, only ramified along a given boundary divisor. For 
 this we introduce the notion of \emph{orbi-\'etale morphisms}.
 
 \begin{defin}[Orbifold-\'etale morphisms]\label{def:OrbiEtale}
 Given a pair $(X,D)$,
 we call a strictly adapted morphism $f: Y\to (X,D)$ orbifold-\'etale, if the divisorial part of the branch locus of 
 $f$ is contained in $\Supp(D)$.
 
 \end{defin}

 \subsection{Orbifold structures}
 In this subsection we give a construction of local charts adapted to pairs, which we will use for 
 the orbifold notions of stability and Chern classes introduced later on in the 
 current section. 
 
 \begin{defin}[Local orbifold structures]\label{def:LocalStructure}
 Let $U_x\subseteq X$ be a Zariski open neighbourhood of $x\in X$ equipped with 
 a surjective, \'etale (quasi-finite) morphism $\sigma_x: \widetilde U_x \to U_x$ and 
 a morphism $g_x:  X_x \to \widetilde U_x$ adapted to $\sigma_x^*(D)$:
 
 $$
 \xymatrix{
 X_x \ar@/^7mm/[rrrr]^{f_x}    \ar[rr]^{g_x}_{\text{adapted}}  &&  \widetilde U_x   \ar[rr]^{\sigma_x}_
 {\text{\'etale}}
   && U_x.
  }
 $$
 We call
 the ordered triple $(U_x, f_x, X_x)$ an orbifold structure at $x$.
 Furthermore, if $(X_x, f_x^*D)$ is log-smooth, we say
 that the orbi-structure defined by $(U_x, f_x, X_x)$
 is \emph{smooth}. 

  \end{defin}
 
 Let us emphasize that we do not require $\sigma_x$ to be finite and thus no such assumption has
 been made for $f_x$ either. 
 As we will see later this has to do with the fact that algebraic 
 klt varieties have algebraic quotient singularities in codimension two only in the étale topology, cf.~e.g. the proof of Proposition~\ref{prop:LcReduced}.  However, we note that $g_x$ is assumed to be finite.  
 
 \begin{defin}[Strict and \'etale orbifold structures]\label{def:Strict}
 In Definition~\ref{def:LocalStructure}, if $f_x$ is strictly adapted or orbi-\'etale, 
 we say that the orbi-structure at $x$ is, respectively, strict or \'etale.
 
 \end{defin}

 \begin{defin}[Global structures]\label{def:GlobalOrbi}
 Let $\mathcal C= \{ (U_{\alpha} , f_{\alpha}, X_{\alpha} )  \}_{\alpha\in I}$, where $I$ is 
 an index set, be a collection of ordered triples describing local orbi-structures on $X$. 
 Let $\alpha, 
 \beta \in I$ and define $X_{\alpha\beta}$ to be the normalization of the fibre
 product $(X_{\alpha} \times_{X} X_{\beta} )$ with 
 the associated commutative diagram:

 $$
  \xymatrix{
    X_{\alpha\beta} \ar[rr]^{g_{\beta\alpha}}  \ar[d]_{g_{\alpha\beta}} && X_{\beta}  \ar[d]^{f_{\beta}} \\
      X_{\alpha} \ar[rr]^{f_{\alpha}}       && X,
  }
  $$
 where $g_{\alpha\beta} : X_{\alpha\beta} \to X_{\alpha}$ and $g_{\beta\alpha}: X_{\alpha\beta} \to X_{\beta}$ 
 are the projection maps. We say that $\mathcal C$ defines an orbi-structure on the pair $(X,D)$, if the following holds. 
 \begin{enumerate}
 \item $X=\bigcup_{\alpha\in I} U_{\alpha} $.
 \item \label{item:compatible} The two morphisms $g_{\beta\alpha}$ and $g_{\alpha\beta}$ are \'etale
 in codimension one.
 
 \end{enumerate}
  
 \end{defin}

 We note that the latter assumption is equivalent to the condition that, 
 for each $\alpha, \beta\in I$, the two morphisms 
 $f_{\alpha}$ and $f_{\beta}$ 
 have the same branch locus in codimension one with equal ramification indices.
 
If the structure $\mathcal C$ is smooth, i.e. if the local orbi-structures $(U_{\alpha} , f_{\alpha}, X_{\alpha} )$ are smooth for  for any $\alpha\in I$, then it follows from Nagata's purity of branch locus that the varieties $X_{\alpha\beta}$ are smooth and the morphisms $g_{\alpha\beta}$ are étale, for all indices $\alpha, \beta \in I$. 
 
 The reader may also like to compare the data $\{(U_{\alpha}, f_{\alpha}, X_{\alpha}) \}_{\alpha}$ to the 
 definition of an algebraic Deligne-Mumford 
 stack with local isotropic groups being trivial at
 the generic point of $X_{\alpha}$ and  
 cyclic of order $a_i$ over the fractional components of $D$.
 Here $X$ should be thought of as the coarse moduli space.

 \begin{defin}
 We say that a pair $(X,D)$ admits an orbi-structure in codimension $k$ for some integer $k\in \mathbb N$ if there exists a Zariski open subset $X^\circ \subset X$ such that
  \begin{enumerate}
 \item $\codim_X(X\setminus X^\circ) \ge k$. 
 \item $(X^\circ,D|_{X^\circ})$ admits an orbi-structure.
 \end{enumerate}
 \end{defin}
 
 \begin{assumptionS}[Orbi-structures are smooth]
\label{smoothOS}
From here onwards and until the end of the paper, any orbi-structure on any pair $(X,D)$ will be assumed to be \textup{smooth}, unless otherwise stated.  
\end{assumptionS}

 Note that a variety $X$ admitting a (smooth) orbi-structure has necessarily finite quotient singularities in the étale and consequently 
 analytic topology.

 \subsubsection{Examples}

 Every pair $(X,D)$ with $D$ reduced, trivially admits an orbifold structure via the 
 identity map.
  We now give a list of examples that are relevant to the rest of our discussions 
  in the current article.

 \begin{example}[Orbi-structures for snc pairs]
 \label{ex:OrbiSmooth}
 Every pair $(X,D)$ with $X$ smooth and $D$ having a simple normal crossing support admits
 various orbi-structures. Indeed, for every $x\in X$ there exists a Zariski open subset 
 $U_x\subset X$ that can be endowed with a natural \'etale orbi-structure as 
 follows. Let $U_x$ be a Zariski neighbourhood of $x$ where each irreducible component 
 $\{(D_i|_{U_x})\}_{i=1}^{k}$ of $D-\lfloor D \rfloor$ is principal, given by the zero locus of $f_i\in \O_{U_x}$.
 Let $\{t_i\}_{i=1}^k$ parametrize each copy of $\CC$ in the cartesian product $\CC^k \times U_x$. Then, 
 the subvariety $V_x \subset \CC^k \times U_x$ defined by the zero locus of $\{(t_i^{a_i}- f_i)\}_{i=1}^k$
 admits a projection onto $U_x$ that is orbi-\'etale with respect to $(X,D)|_{U_x}$. 
 The existence of the orbi-étale structure now follows from repeating this construction
 for each $x\in X$.
 
 Such pairs also admit a global, but certainly non-canonical orbifold structure. More precisely, thanks to Kawamata's 
 construction, cf. e.g. \cite[Prop.~4.1.12]{PAG1}, every snc 
 pair $(X,D)$ admits a strict, orbi-structure $f: Y \to X$, which fails to be orbi-\'etale along 
 a non-unique, very ample divisor.
 \end{example}

 \begin{remark}
 \label{codim1}
 As a consequence of Example~\ref{ex:OrbiSmooth} above, any pair $(X,D)$ admits various orbi-structures 
 in codimension one. 
 \end{remark}

 \begin{example}[The normal case]\label{ex:QFactorial}
 In the normal case constructions similar to the smooth example
 exist. For example, thanks to~\cite[Prop.~2.38]{CKT}, there exists a (global)
 strictly adapted morphism $f: Y\to (X,D)$. We note that in this construction $f: Y\to X$ is 
 constructed as the finite morphism in the Stein factorization of the map $\widetilde Y\to X$,
 where $\widetilde Y$ is the smooth quasi-projective variety in the Kawamata covering 
 $\widetilde f: \widetilde Y \to (\widetilde X, \widetilde D)$ of a log-resolution 
 $\pi:\widetilde X\to X$  of $(X,D)$:
 
 $$
 \xymatrix{
 \widetilde Y \ar[d]   \ar[rr]^{\widetilde f}  &&   \widetilde X \ar[d]^{\pi} \\
 Y  \ar[rr]^{f}  &&                       X.
 }
 $$
 As such the morphism is branched in codimension one over $\Supp(D)$ and a divisor $H$.
 Moreover, if one assumes that each component $D_i$ of $D$ is $\Q$-Cartier, then one can guarantee that $H$ belongs to a very ample linear system in $X$. 
To see this one can argue as follows. By assumption, there exist $m_i \in \mathbb N$ such that for any index $i$, $m_i\cdot D_i$ can be written as the difference
of two very ample divisors $A_i$ and $B_i$: $m_i\cdot D_i = A_i - B_i$.
Let $E$ an effective exceptional divisor such that $-E$ is ample
over $X$ and that, for sufficiently small $\epsilon \in \mathbb Q^+$, the two divisors 
 $(\pi^*A_i - \epsilon \cdot E)$ and $(\pi^*B_i  - \epsilon \cdot E)$ are 
 ample in $\widetilde X$.  
 
 Now, let $c_i\in \mathbb N$ be sufficiently large and divisible so that 
 $c_i\cdot (\pi^*A_i  - \epsilon E)$ and $c_i\cdot (\pi^*B_i -\epsilon \cdot E)$ are very ample.
 After pulling back we have
 
 $$
 (c_i\cdot m_i)\cdot \pi^*D_i   =  c_i\cdot(\pi^*A_i  -  \epsilon \cdot E)   -  c_i \cdot(\pi^*B_i  - \epsilon \cdot E).
 $$
 The original arguments of Kawamata as in the proof of~\cite[Thm.~4.1.10]{PAG1}---via the so-called 
 Bloch-Gieseker covering---now apply and we can construct a covering 
 $\sigma: Z\to X$ by taking roots. In particular there exists a line bundle $\sN$ on $Z$
 such that
 $$
 \sN^{\otimes(a_i\cdot c_i\cdot m_i)}  \cong  \sigma^*\O_{\widetilde X}\big( \pi^*(c_i\cdot m_i \cdot D_i) \big).
 $$
 Following the rest of the arguments of Kawamata as in~\cite[Prop.~4.1.12]{PAG1}
 we can then proceed to construct the morphism $\widetilde f$ as above
 with $H$ (the additional branch locus) being a general member of a very ample linear system 
 in $X$.
  \end{example}

 \subsection{Orbi-structures for log canonical spaces}
 \label{subsect:OrbiLc}
 In this section, we construct orbi-étale structures in codimension two for pairs satisfying the assumptions of Theorem~\ref{thm:main-ineq}. Later, this will make it possible to define a meaningful notion of Chern classes for such pairs.  \\
 
 But first, let us recall the following result, which is a consequence of the classification of dlt singularities 
 pairs in dimension two, see~\cite[Cor.~9.14]{GKKP} for more details. 
 
 \begin{propo}
 \label{prop:LcReduced}
 \emph{\textbf{(Dlt spaces admit orbi-étale structures in codimension 2).}}
 
 \noindent
 Let $(X,D)$ be a dlt pair with reduced $D$. There exists a Zariski open subset $X^\circ \subseteq X$
 with $\codim_X(X\backslash X^\circ)\geq 3$ over which $(X,D)$ admits an orbi-\'etale structure.
 
 \end{propo}
 
 \begin{proof}
 According to~\cite[Cor.~9.14]{GKKP}, there exists a Zariski open subset $X^1\subseteq X$ with 
 $\codim_X(X\backslash X^1)\geq 3$ such that
 $$
 D|_{X_1} \subset \bigcup_{i=1}^k U_i,
 $$
 where each $U_i$ is a Zariski open subset equipped with a finite, quasi-\'etale, surjective and 
 Galois morphism of quasi-projective varieties $f_i: X_i \to U_i$ and such that  
 $(X_i, f_i^*(D))$ is log-smooth. 
 
 On the other hand, as $X\backslash D$ is klt, it has only quotient singularities in codimension two, 
 cf.~\cite[Prop.~9.3]{GKKP}. Thanks to Artin's approximation for algebraic quotient singularities \cite[Cor.~2.6]{Artin69},
 it follows that there exists an open subset $X^2\subseteq (X\backslash D)$ with $\codim_{X\backslash D} ((X\backslash D) \backslash X^2) \geq 3$,
 admitting a Zariski open covering $U_{k+1},  \ldots, U_{k+l}$ satisfying the following properties. 
 
 \begin{enumerate}
 \item Each $U_i$, $i\in \{ k+1, \ldots, k+l   \}$, is equipped with an \'etale and 
 surjective morphism $\sigma_i:  \widetilde U_i  \to  U_i$.
 
 \item For each such $\widetilde U_i$ there exists a quasi-\'etale, Galois, surjective 
 morphism $g_i:  X_i  \to \widetilde U_i$ with $X_i$ smooth that is ramified only over the singular locus of $\widetilde U_i$.
 \end{enumerate}

 Now, for $i\in \{k+1, \ldots, k+ l \}$, let $f_i:= g_i \circ \sigma_i$ and define $X^\circ =  X^1\cap X^2$.
 The set $\{ (U_{\alpha}, f_{\alpha}, X_{\alpha}) \}_{\alpha\in I}$, $I= \{ 1, \ldots, k+l  \}$,
 now defines a orbi-\'etale structure over $X^\circ$.

 \end{proof}

\subsection{Two examples of singularities without orbi-structures}
\label{examples}
The very elementary examples below show how the restrictions on the singularities in Theorem~\ref{thm:main-ineq} cannot be removed if one wants to find a \textit{smooth} cover (in codimension two) that is adapted to the boundary divisor.\\

The first example below shows that if the pair $(X,D)$ is not dlt but merely log canonical, there is no hope in finding a smooth cover in codimension two, even if the coefficients are standard. 
 
  \begin{example} 
  \label{cusp}
  Set $X:=\mathbb C^2$, and let $C=\{y^2+x^3=0\}\subset \mathbb C^2$ be the cusp. Then it is well known that $\mathrm{lct}(X, C)= \frac 56$, which means that $(X,\frac 56 C)$ is lc but not klt. Denote by $Y:=\{t^6=y^2+x^3\}\subset \mathbb C^3$ the standard cover and set $C':=\{t=0\}$. The ramification formula can be written the following two ways 
$$K_Y \sim_{\Q} p^*(K_X+\frac 56 C) \quad \mbox{or} \quad K_Y+C' \sim_{\Q} p^*(K_X+C)$$
which shows that $Y$ is indeed lc but not klt and that $(Y,C')$ is not lc. (We also note that the singularity $(Y,0)$, called simple elliptic, is not ---as expected--- a finite quotient singularity, cf.~\cite[Thm. 3.6]{Kollar97}.)
\end{example}

\noindent
 The next example, in a similar vein as above, shows that if one does not require $D$ to have standard coefficients, then the ramified covers will in general \emph{not} be klt; an obstruction to much of the theory that will be developed in 
 Subsection~\ref{subsect:OrbiChern}, cf also Proposition~\ref{prop:LcReduced}.
 
   \begin{example} 
   \label{standard}
   Let $(X,D)$ be a pair with $X$ smooth, $D$ irreducible and reduced such that $\mathrm{lct}(X,D)<1$. Then for $m$ large enough, $(X,\frac 1mD)$ is klt; however, any cover $p:Y\to X$ ramified at order $m$ along $D$ will satisfy
   $$K_Y \sim_{\Q} p^*(K_X+\frac{m-1}{m}D)$$
   but for $m$ large enough, the pair $(X, \frac{m-1}{m}D)$ is not klt (or lc) anymore so that $Y$ has worse singularities than lc. Therefore $Y$ cannot be (locally) covered by a smooth variety, which as we will see in Subsection~\ref{subsect:OrbiChern} poses a major difficulty for defining orbifold Chern classes.
\end{example}

 \subsection{The approximation process}
 In the current section we establish a technical ingredient that enables us to reduce 
 the problem of establishing the Miyaoka-Yau inequality to the klt case. This is the content of Proposition~\ref{prop:OrbiLc},
 but first need the following variant of~\cite[Prop.~5.20]{KM} on the behaviour of singularities under certain 
 finite morphisms. We note that we will use the notation $(X,D)_{\reg}$ to denote the locus of $X_{\reg}$
 over which $D$ has simple normal crossing support.

 \begin{lemma}
 \label{lem:dlt}
 Let $Y$ be a quasi-projective variety and let $D_Y$ be an effective $\Q$-divisor such that $(Y,D_Y)$ is dlt. Let $f:X\to Y$ be a finite surjective morphism from a normal variety $X$ and let us define the effective divisor $D_X$ by the identity $K_X+D_X=f^*(K_Y+D_Y)$. Assume that $f^{-1}((Y,D_Y)_{\rm reg})\subset (X,D_X)_{\rm reg}$. Then $(X,D_X)$ is dlt. 
 \end{lemma}

\begin{proof}
We follow the proof and notations of \cite[Prop.~3.16]{Kollar97}. Let $Z_Y$ (resp. $Z_X$) be the complement of $(Y,D_Y)_{\rm reg}$ (resp. $(X,D_X)_{\rm reg}$) in $Y$ (resp. $X$). By the assumption made, one has $f(Z_X) \subset Z_Y$. For any birational proper morphism $\pi_Y:Y'\to Y$, one deduces a proper birational morphism $\pi_X:X'\to X$ by taking the normalization of the fiber product. As $(Y,D_Y)$ is dlt, it follows that for any prime divisor $E_Y\subset Y'$ such that $\pi_Y(E_Y)\subset Z_Y$, one has $a(E_Y,Y,D_Y)>-1$. Let us now consider a prime divisor $E_X\subset X'$ such that $\pi_X(E_X)\subset Z_X$, its image $E_Y$ on $Y'$ satisfies $\pi_Y(E_Y)\subset Z_Y$, and it the follows from the proof of \textit{loc. cit.} that $a(E_X,X,D_X)\ge a(E_Y,Y,D_Y)$, which concludes the proof. 
\end{proof}

 \begin{propo}\label{prop:OrbiLc}
 Let $(X,D)$ be a dlt pair with standard coefficients such that each component of $D$ is $\Q$-Cartier. Then, there exists an ample divisor $H$ such that
 the following condition is satisfied:
 
 \noindent
For all integers $m\geq 2$, the new pair $(X, D_m): = (X, D + \frac{1}{m}\cdot H)$ is a klt pair 
 admitting an orbi-\'etale structure in codimension two, up to a strictly adapted morphism.
 More precisely, there exists a strictly adapted morphism $f:Y\to (X, D- \lfloor D \rfloor)$
 with $Y$ having an orbi-\'etale structure $\{ (V_{\underline{\alpha}(m)} , f_{\underline{\alpha}(m)} , Y_{\underline{\alpha}(m)})  \}$ in codimension two with respect to the divisor $f^*(D_m)$.

 \end{propo}
 
 \begin{remark}
 The orbifold structure on $(X,D_m)$ is \textit{not} the obvious one but it is rather the one adapted to the decomposition~\eqref{decomp} below.
 \end{remark}
 
 \begin{proof} [Proof of Proposition~\ref{prop:OrbiLc}]
  Let $L$ be a very ample Cartier divisor and $A\in |L|$ a general member to be chosen later. 
 Define $H:=  A -\lfloor D \rfloor $. 
 Now, the divisor $D_m:=D+(1/m)\cdot H$ can be decomposed as:
 \begin{equation}
 \label{decomp}
 D_m : = \; \; \underbrace{(D - \lfloor D \rfloor)}_{D^{\orb}} \quad + 
 \quad (1-\frac{1}{m})\cdot  \lfloor D \rfloor \quad + \quad \frac 1m \cdot A.
 \end{equation}
 From here, we proceed in several steps. \\
 
 \noindent
 \textbf{Step 1.} \emph{$(X, D_m)$ is klt.} 
 
 \noindent
 Given $\pi:\widetilde  X\to X$ a log resolution of $(X, D)$, the linear system $|\pi^*L|$ on $\widetilde X$
 is basepoint free. By Bertini theorem, one can choose a sufficiently general member $\pi^*A$ of that system that contains no components of the exceptional divisor or the strict transform of $D$, and such that $\pi^*A+\mathrm{Exc}(\pi)+\pi^*D$ has snc support, cf.~\cite[9.1.9 \& 9.2.29]{PAG2}. As a consequence, the pair $(X, D+A)$ is lc and as $X$ is klt, it follows easily that $(X,D_m)$ is klt as soon as $m\ge 2$. More can be said: since $(X,D)$ is dlt, the proof of \cite[Lem.~5.17(2)]{KM} and Szabo's characterization of dlt pairs \cite[Thm.~2.44]{KM} easily imply that $(X,D+A)$ is dlt for $A$ general.\\
 
 \bigskip

 \noindent
 \textbf{Step 2.} \emph{Global cover adapted to $D^{\rm orb}$.}
 
 \noindent
 Following Example~\ref{ex:QFactorial}, we can find a morphism $f: Y \to (X, D^{\rm orb})$
 strictly adapted to $D^{\rm orb}$. By construction, $f$ is ramified  
 along a divisor a general member $H'$ of a very ample linear system 
 in $X$. For simplicity we assume that $f$ is totally ramified along $H'$.
 As such, the same arguments as in Step.~1
 show that $(X, D^{\rm orb} + H')$ is dlt and thus so is 
 $$
 \big(  X,  D^{\rm orb} + \frac{N- 1}{N}\cdot H' \big),
 $$
 where $N= \deg(f)$.
 From the ramification formula
 $$
 K_Y  +  f^*(\lfloor D \rfloor + A)\sim_{\mathbb Q} f^*(K_{X}  +  D + \frac{N-1}{N}\cdot H' +A),
 $$
 together with Lemma~\ref{lem:dlt}, it follows that $(Y, f^*(\lfloor D \rfloor + A))$ is dlt.
 
 \bigskip 
    \noindent
 \textbf{Step 3.} \emph{Covers adapted to $ (1-\frac{1}{m}) \cdotp \lfloor D\rfloor +\frac 1m \cdotp A$.}
 
 \noindent
 According to Proposition~\ref{prop:LcReduced}, there exists $Y^\circ \subseteq Y$ with
 $\codim_Y(Y\backslash Y^\circ)\geq 3$, over which we can
  find a finite family of (quasi-finite) quasi-\'etale covers $h_{\alpha}: W_{\alpha} \to V_{\alpha}$
 by quasi-projective varieties $W_{\alpha}$ factoring as follows:
 
 $$
 \xymatrix{
 W_{\alpha} \ar[rr]^{h_{\alpha}} \ar[dr]_{\text{Galois}}^{r_{\alpha}} && V_{\alpha}  \\
 & \widetilde V_{\alpha}  \ar[ur]^{\sigma_{\alpha}}_{\text{\'etale}}
 }
 $$
 
 and such that:
 
 \begin{enumerate}
  \item[$\bullet$] $Y^\circ = \bigcup_{\alpha} V_{\alpha}$.  
 \item[$\bullet$] For each $\alpha$, the pair $(W_{\alpha},h_{\alpha}^*f^*(\lfloor D\rfloor +A))$ is 
 log-smooth. 
 \item[$\bullet$] Each $r_{\alpha}$ is Galois and $\sigma_{\alpha}$ is \'etale.
 \end{enumerate}

 Now, following Example~\ref{ex:OrbiSmooth}, for each $W_{\alpha}$, one can find a finite 
 collection of finite maps
 $$
 g_{\alpha\beta}:  Y_{\alpha\beta} \to W_{\alpha},
 $$
 whose images cover $W_{\alpha}$, are branched exactly at order $m$ along 
 $h_{\alpha}^*\big( f^*(\lfloor D\rfloor +A)  \big)$. Moreover each 
 $g_{\alpha\beta}^*h_{\alpha}^*\big( f^*(\lfloor D \rfloor + A)  \big)$ is log-smooth.
 
 Now, define $f_{\alpha\beta}:= g_{\alpha\beta}\circ h_{\alpha}$. Noting that
 the branch locus of $g_{\alpha\beta}$ is equal to $h_{\alpha}^{-1}(\lfloor D\rfloor +A)$,
 from the constructions in Example~\ref{ex:QFactorial} it follows that along the 
 fibres of $(g_{\alpha\beta}\circ r_{\alpha})$ the ramification index is constant and
 therefore it is Galois.
 
 Let $\underline{\alpha}(m)$ denote the indexing pair $(\alpha, \beta)$ and define 
 $$
 W_{\underline{\alpha}(m)}  := { \rm Im} \big( f_{\underline{\alpha}(m)}  :  Y_{\underline{\alpha}(m)} \to W_{\alpha}\big),
 $$
 $$
 h_{\underline{\alpha}(m)} :=  h_{\alpha}|_{W_{\underline{\alpha}(m)}}, \; \; \text{and}
 $$
 $$
 V_{\underline{\alpha}(m)} :=  {\rm Im} \big( h_{\underline{\alpha}(m)}: W_{\underline{\alpha}(m)} \to V_{\alpha}  \big).
 $$
 Finally, set $D^{\red}_{W_{\underline{\alpha}(m)}}: = h^*_{\underline{\alpha}(m)}(f^*(\lfloor D \rfloor))$ and 
 $A_{W_{\underline{\alpha}(m)}}:= h^*_{\underline{\alpha}(m)} ( f^*A )$.
We summarize this construction in the following diagram. 

$$
 \begin{xymatrix}{
    Y_{\underline{\alpha}(m)} \ar@/^9mm/[rrrrrr]^{f_{\underline{\alpha}(m)}} 
    \ar[rrrrr]^{g_{\underline{\alpha}(m)}}_{\text{adapted to $(1-\frac 1m)D^{\red}_{W_{\underline{\alpha}(m)}}+\frac 1m A_{W_{\underline{\alpha}(m)}}$}} &&&&& W_{\underline{\alpha}(m)} 
    \ar[r]_{h_{\underline{\alpha}(m)}} & V_{\underline{\alpha}(m)}\subseteq Y  \ar[rrr]^{f}_{\text{adapted to $D^{\orb}$}} 
    &&& X.}
    \end{xymatrix} 
 $$

 The collection $\{ \big(  V_{\underline{\alpha}(m)} , f_{\underline{\alpha}(m)},  Y_{\underline{\alpha}(m)}  \big) \}$
 now defines, up to the strictly adapted morphism $f: Y\to X$, an orbi-\'etale structure for the pair 
 $(X, D_m)$.
 \end{proof}

 \subsection{Orbi-sheaves and Chern classes}
 \label{subsect:OrbiChern}
 In this subsection we introduce sheaves and Higgs sheaves associated to orbifold structures. 
 We will then define a notion of orbifold Chern classes. 
 
 \begin{defin}[Orbi-sheaves]
 \label{def:OrbiSheaf}
 Let $\mathcal C = \{ (U_{\alpha}, f_{\alpha}, X_{\alpha}) \}_{\alpha \in I}$ be an orbi-structure 
 on a given pair $(X,D)$. As in Definition~\ref{def:GlobalOrbi}, let $X_{\alpha\beta}$ 
 be the normalization 
 of the fibre product $X_{\alpha} \times_{X} X_{\beta}$ with naturally induced morphisms 
 $g_{\alpha\beta}: X_{\alpha\beta}\to X_{\alpha}$ and $g_{\beta\alpha}: X_{\alpha\beta}\to X_{\beta}$.
 We call a collection $\{ \F_{\alpha} \}_{\alpha}$ of 
 coherent sheaves of rank $r$
 on each $X_{\alpha}$ an orbi-sheaf on $(X,D)$ of rank $r$ with respect to $\mathcal C$,
 if the following conditions are verified. 
 
 \begin{enumerate}
 \item \label{compa} 
 There exists an isomorphism 
 $$
 g_{\alpha\beta}^*(\F_{\alpha})\cong g_{\beta\alpha}^*(\F_{\beta})
 $$
 of sheaves of $\O_{X_{\alpha\beta}}$-modules on $X_{\alpha\beta}$.
 
 \item The collection $\{\F_{\alpha}\}_{\alpha}$ 
 verifies further natural compatibility conditions over triple overlaps. 
 \end{enumerate}
 We denote this collection by $\F_{\mathcal C}$.
 We say that $\F_{\mathcal C}$ is torsion-free, reflexive or locally free orbi-sheaf,
 if each $\F_{\alpha}$ is torsion-free, reflexive or locally free, respectively. 
 \end{defin}

 \begin{remark} 
 
From the compatibility condition~\ref{compa}, with $\alpha=\beta$, it follows that 
each $\F_{\alpha}$ has a natural structure of a $G_{\alpha}$-sheaf, where $G_{\alpha}:= \Gal(X_{\alpha} / \widetilde U_{\alpha})$. 
 
 \end{remark}

 \begin{defin}[Orbi-subsheaves]\label{def:OrbiSubsheaf}
 Let $(X,D)$ be a pair with an orbi-structure $\mathcal C = \{ (U_{\alpha}, f_{\alpha}, X_{\alpha}) \}_{\alpha \in I}$.
 Let $\E_{\mathcal C}$ and $\F_{\mathcal C}$ be two orbi-sheaves
 with respect to $\mathcal C$. We say that $\F_{\mathcal C}$ is an orbi-subsheaf 
 of $\E_{\mathcal C}$, if, for each $\alpha$, we have the inclusion 
 $\F_{\alpha}\subseteq \E_{\alpha}$.

 \end{defin}
 
 One can also naturally define orbifold morphisms.
 
 \begin{defin}[Orbifold morphisms]\label{def:OrbiMorph}
 An orbifold morphism of two orbi-sheaves $\F_{\mathcal C}$ and $\G_{\mathcal C}$
 is a collection of  morphisms of sheaves of $\O_{X_{\alpha}}$-modules $\phi_{\alpha}: \F_{\alpha}\to \G_{\alpha}$
 that can be glued, that is, for every $\alpha$ and $\beta$, the diagram 
 
  $$
 \xymatrix{
 g_{\alpha\beta}^*\F_{\alpha} \ar[rr]^{g_{\alpha\beta}^* \phi_{\alpha}} \ar[d]^{\cong}  && g_{\alpha\beta}^*\G_{\alpha} \ar[d]^{\cong} \\
 g_{\beta\alpha}^*\F_{\beta} \ar[rr]^{g_{\beta\alpha}^*\phi_{\beta}}   && g_{\beta\alpha}^*\G_{\beta} 
 }
 $$
 commutes, where the vertical isomorphisms are the ones defined in Definition~\ref{def:OrbiSheaf}.

 \end{defin}

 \vspace{4 mm}
 
 \subsubsection{Higgs sheaves in the orbifold setting} 
 Higgs bundles are holomorphic bundles with compatible and integrable smooth 
 operators. To define orbifold Higgs sheaves we first recall the definition of the sheaf of $1$-forms
 adapted to a given orbifold structure.
 
\begin{notation}\label{not:adapt} Let $f: Y\to X$ be a morphism adapted to $D$, 
where $D=\sum d_i\cdot D_i$, $d_i=(1-\frac{b_i}{a_i})  \in (0,1]\cap \Q$.
Set $D_{f} = f^*(\lfloor D \rfloor)$.
For every prime component $D_i$ of $D-\lfloor D\rfloor$, let $\{D_Y^{ij}\}_{j(i)}$ be the 
collection of prime divisors that appear in $f^{-1}(D_i)$. We define new divisors in $Y$ by 
\begin{flalign}\label{not}
\widehat D_Y^{ij}:=b_i\cdot D_Y^{ij} 
\end{flalign}
\end{notation}

\begin{defin}[Orbifold (co)tangent sheaf]\label{def:adaptedsheaf} 
Let $(X,D)$ be a pair with a given orbifold structure 
$\mathcal C=\{ (U_{\alpha} , f_{\alpha} , X_{\alpha}  ) \}_{\alpha \in I}$.
Let $G_{\alpha}:=\Gal(X_{\alpha}/ \widetilde U_{\alpha})$.
For every $\alpha \in I$, define $D^{ij}_{X_{\alpha}}$ and $D_{f_{\alpha}}$ 
to be the divisors defined in 
Notation~\ref{not:adapt}.  
Define $\Omega_{(X_{\alpha}, f_{\alpha}, D)}^1$ to be the kernel of the sheaf morphism

$$
  \xymatrix{
    ((f_{\alpha})|_{X_{\alpha}})^{[*]}\bigl(\Omega_X^{[1]} (\log (\ulcorner D \urcorner))\bigr) \ar[rr] && \bigoplus \limits_{i,j(i)}\O_{{\widehat D_{X_{\alpha}}^{ij}}}
 }
  $$

\noindent
induced by the natural residue map.   
We define the orbi-cotangent sheaf $\Omega^{1}_{\mathcal C}$ of $(X,D)$ 
with respect to $\mathcal C$ to be the orbi-sheaf given by the collection of $G_{\alpha}$-sheaves 
$\{   \Omega_{(X_{\alpha}, f_{\alpha}, D)}^{1} \}_{\alpha \in I}$.
We define the orbifold tangent sheaf $\sT_{(\mathcal C, D)}$ by the collection $\{ \sT_{(X_{\alpha}, f_{\alpha}, D)} \}_{\alpha}$, 
where $\sT_{(X_{\alpha}, f_{\alpha}, D)}: =  (  \Omega^{1}_{(X_{\alpha}, f_{\alpha}, D)})^*$.
 \end{defin}
  
 By $\Omega^{[1]}_X$ and $f^{[*]}(\cdot)$ in Definition~\ref{def:adaptedsheaf} we denote the reflexive hull
 of $\Omega_X^1$ and $f^*$, respectively. 
 We note that when $(X_{\alpha}, \sum D^{ij}_{X_{\alpha}})$ is not log-smooth, through 
 coherent extensions, we can use Definition~\ref{def:adaptedsheaf} to define the orbifold cotangent sheaf
 as a reflexive sheaf on $X_{\alpha}$, which we can denote by $\Omega^{[1]}_{(X_{\alpha}, f_{\alpha}, D)}$.
 We refer to \cite[\S 3]{CKT} for local, explicit description of the orbi-cotangent sheaf in terms of differential forms with zeros and poles. Using this description it is not difficult to check that Definition~\ref{def:adaptedsheaf}
 indeed defines an orbifold sheaf. Let us briefly explain the case of the smooth orbi-\'etale structure 
 $\mathcal C=\{ (U_{\alpha}, f_{\alpha}, X_{\alpha}) \}$ for a log-smooth pair $(X,D)$ with
 $D = \sum (1- \frac{1}{a_i}) D_i$. The more general case will be similar. 
 
 According to Definition~\ref{def:adaptedsheaf}, for each $\alpha$ we have
 $$
 \Omega^1_{(X_{\alpha}, f_{\alpha}, D)} =  \Omega^1_{X_{\alpha}}.
 $$
 Let $X_{\alpha\beta}$ be the normalization of $X_{\alpha}\times_X X_{\beta}$
 with maps $f_{\alpha\beta}: X_{\alpha\beta} \to X_{\alpha}$ and $f_{\beta\alpha}: X_{\alpha\beta}\to X_{\beta}$.
  The condition~\ref{item:compatible} now guarantees that, as subsheaves of $\Omega^{1}_{X_{\alpha\beta}}$, 
  the two sheaves $f_{\alpha\beta}^*\Omega^1_{(X_{\alpha}, f_\alpha, D)}$ and 
  $f_{\beta\alpha}^*\Omega^1_{(X_{\beta}, f_{\beta}, D)}$ are isomorphic. The 
  compatibility condition over triple overlaps can be checked similarly.

  \
  
 \begin{defin}[Orbi-Higgs sheaves]
 Let $(X,D)$ be a pair equipped with an orbi-structure $\mathcal C$.
 We call an orbi-sheaf $\F_{\mathcal C}$ an orbi-Higgs sheaf, if 
 there is an orbi-sheaf morphism 
 $\theta_{\mathcal C}:  \F_{\mathcal C} \to  \F_{\mathcal C}\otimes  \Omega_{\mathcal C}^{1}$
 satisfying the integrability condition $\theta_{\mathcal C}\wedge \theta_{\mathcal C}=0$.
 An orbi-Higgs subsheaf is then defined to be an orbi-subsheaf that is invariant under $\theta_{\mathcal C}$.
 \end{defin}

 \subsubsection{Global covers associated to orbi-structures}
 \label{subsect:GlobalCover}
 In this section, we recall a construction due to Mumford \cite[\S 2]{MR717614} enabling to define Chern classes for varieties with quotient singularities. We refer to  \cite[\S 3.7]{GKPT15} for more details
 in the case of orbifold with no boundaries; the classical $\mathcal Q$-varieties. Let us note that in our case, the local covers will not be quasi-étale but this won't affect the general constructions. \\
 
 Let $(X,D)$ be a pair with an orbi-structure 
 $\mathcal C = \{ (U_{\alpha} , f_{\alpha}, X_{\alpha}  ) \}_{\alpha \in I}$. We consider a finite, Galois field extension of the function field $\mathbb C(X)$
 containing all the function fields $\mathbb C(X_{\alpha})$. Let $\widehat X_{\mathcal C}$ be the normalization 
 of $X$ in this field extension so that $X=\widehat X_{\mathcal C}/ G$ with Galois group $G$. Let 
 $f: \widehat X_{\mathcal C} \to X$ be the induced finite Galois morphism factoring through each $f_{\alpha}$: 
 
 $$
 \begin{tikzcd}
 \widehat X_{\mathcal C} \arrow{rrrr}{f}
 &&&& X \\
 \widehat X_{\alpha} \arrow[hookrightarrow]{u} \ar{rr}{q_{\alpha}}
 && X_{\alpha} \ar{rr}{f_{\alpha}}
 &&  U_{\alpha}  \arrow[hookrightarrow]{u}
 \end{tikzcd}
 $$
 where we have set $\widehat X_{\alpha}:=f^{-1}(U_{\alpha})$. By construction, $f|_{\widehat X_{\alpha}}$ factors through $f_{\alpha}$, so that there exists  $q_{\alpha}:\widehat X_{\alpha}\to X_{\alpha}$ such that $f|_{\widehat X_{\alpha}}=f_{\alpha} \circ q_{\alpha}$.
 We sometimes refer to $f$ as the \emph{Mumford cover} associated to $\mathcal C = \{ (U_{\alpha} , f_{\alpha}, X_{\alpha}  ) \}_{\alpha \in I}$. Furthermore, given any orbi-sheaf $\F_{\mathcal C}$ we can define a 
 $G$-sheaf $\widehat \F_{\mathcal C}$ on 
 $\widehat X_{\mathcal C}$ associated to $\mathcal C$ by 
 $$
 \widehat \F_{\mathcal C}|_{\widehat X_{\alpha}}=q_{\alpha}^*(\F_{\alpha}).
 $$

 \subsubsection{Orbi-Chern classes}\label{subsect:OrbiCherns}
 Let $(X,D)$ be a pair such that $X^\circ$, the maximal Zariski open subset of $X$ 
 over which $(X,D)$ admits an orbi-structure  $\mathcal C= \{  (U_{\alpha}^\circ, f_{\alpha}, X_{\alpha}^\circ   )  \}$, satisfies $\codim_X(X\backslash X^\circ)\geq 3$. 

 Let $f: \widehat X_{\mathcal C}^\circ \to
 X^\circ$ be the Mumford cover, defined in Subsection~\ref{subsect:GlobalCover}, 
 associated to the orbi-structure $\mathcal C$. 
 Let $G:=\Gal(\widehat X^\circ_{\mathcal C} / X^\circ)$. Following the notations introduced in Subsection~\ref{subsect:GlobalCover}, 
 let $\F^\circ_{\mathcal C}:=\{ \F^\circ_{\alpha}\}_{\alpha}$ be a coherent orbi-sheaf and $\widehat \F^\circ_{\mathcal C}$ the associated $G$-sheaf 
 on $\widehat X^\circ_{\mathcal C}$ defined by the equality $\widehat \F^\circ_{\mathcal C}|_{\widehat X_{\alpha}^\circ} = q_{\alpha}^*(\F^\circ_{\alpha})$.

As $\widehat X^{\circ}_{\mathcal C}$ is normal, it is Cohen-Macaulay in codimension $2$, so up to shrinking $X^{\circ}$ one can assume that $\widehat X^{\circ}_{\mathcal C}$ is Cohen-Macaulay. Combined with the smoothness of $X_{\alpha}^\circ$, this implies that the finite morphism $q_{\alpha}$ is flat (see, for example,~\cite[Obs.~3.5]{GKPT15}) and therefore the 
 finite resolution of each $\F^\circ_{\alpha}$ by locally free sheaves lifts. Thanks to \cite[Prop.~2.1]{MR717614}, the 
 $G$-sheaf $\widehat \F^\circ_{\mathcal C}$ on $\widehat X_{\mathcal C}^\circ$ has a finite, locally free resolution;
 in particular we can define its Chern classes $\widehat \F^\circ_{\mathcal C}$. We define the $i$th orbifold Chern class of 
 $\F^\circ_{\mathcal C}$ by:
 $$
 c_i(\F^\circ_{\mathcal C}):= \frac{1}{|G|}\cdot  \psi_{i} \bigl(  c_i(\widehat \F^\circ_{\mathcal C}) \bigr) \in 
 \A_{n-i} (X^\circ) \otimes \Q,
 $$
 where $\psi_{\bullet}$ is the canonical map  
 $$
 \psi_{\bullet}: \A^{\bullet}(\widehat X^\circ_{\mathcal C})^G \otimes \Q \to  \A_{n-\bullet}(X^\circ)\otimes \Q,
 $$
 defined by the cap product with $[\widehat X^\circ_{\mathcal C}]$ and pushforward.
 \footnote{As we are defining Chern classes as elements of $\A_{\bullet}(X^\circ)$ the appearance of 
 the factor $\frac{1}{|G|}$ is natural. More precisely, this choice is a reflection of the fact that
 the composition of maps 
 $\A_{\bullet}(X^\circ)  \to A^{n-\bullet}(\widehat X^\circ_{\mathcal C})^G \xrightarrow{\psi_{\bullet}} \A_{\bullet}(X^\circ)$ 
 is just multiplication 
 by $|G|$ (see~\cite[Lem.~3.5]{MR717614}). For the map $\A_{\bullet}(X^\circ)  \to A^{n-\bullet}(\widehat X^\circ_{\mathcal C})^G$ 
 we refer the reader to 
 \cite[Thm.~3.1]{MR717614} or Claim~\ref{claim:ring} in the appendix of this paper.}

 On the other hand, from 
 the localization sequence of Chow groups, we have
 $A_{n-i}(X^\circ)\cong A_{n-i}(X)$, whenever $ \codim_X(X\backslash X^\circ)>i$.
  As a result when $X$ is projective, and by using the homomorphism $\psi_{\bullet}$, the classes 
  $c_1(\F^\circ_{\mathcal C})$ 
  and $c_2(\F^\circ_{\mathcal C})$ are all well-defined as multilinear forms on 
  $\Neron^1(X)_{\mathbb Q}^{n-1}$and $\Neron^1(X)_{\mathbb Q}^{n-2}$, respectively.

  \begin{remark}
  \label{codim11}
  Recall from Remark~\ref{codim1} that any pair $(X,D)$ admits an orbi-structure $\mathcal C$ in codimension one. Now, if $\mathcal F^\circ_{\mathcal C}$ is a coherent orbi-sheaf with respect to $\mathcal C$, the construction above allows us to construct its first Chern class $c_1(\mathcal F^\circ_{\mathcal C}) \in \A_{n-1}(X)$.
  \end{remark}

 \begin{remark}
   It is clear from the above construction that Chern classes of orbi-sheaves can 
  be defined even in the absence of the Galois property for adapted morphisms in the orbifold structure 
  in Definition~\ref{def:LocalStructure} and the linearization property for such sheaves in 
  Definition~\ref{def:OrbiSheaf}. But as we will see next, such equivariance conditions  
  is necessary to have a meaningful notion of products for Chern classes. 
  \end{remark}

To see that we can define \emph{product} of Chern classes of orbi-sheaves in 
the Chow group $A(X^\circ)\otimes \Q$, we need to
equip this graded group with a ring structure compatible with $\psi_{\bullet}$.
To this end, we are going to assume that the orbifold structure 
$\mathcal C =\{ (U^\circ_{\alpha}, f_{\alpha}, X^\circ_{\alpha} ) \}$ satisfies the 
following condition: 

\begin{assumption}\label{assume}
In the setting of Definition~\ref{def:LocalStructure}, the (adapted) morphism $g_{\alpha}: X^\circ_{\alpha}
\to \widetilde U^\circ_{\alpha}$ factors as follows:

$$
\xymatrix{
X^\circ_{\alpha} \ar@/^10mm/[rrrrrr]^{g_{\alpha}}  \ar[rrrr]^{p_{\alpha}}_{\text{adapted}}  &&&&  W^\circ_{\alpha} \ar@/_7mm/[rrrr]_{p_{\alpha}'}
\ar@{->}[rr]^(.45){\text{quasi-\'etale}} && \widetilde U^\circ_{\alpha} 
  \ar[rr]^{\sigma_{\alpha}}_{\text{\'etale}} && U^\circ_{\alpha},
}
$$
where $W^\circ_{\alpha}$ is smooth and $p_{\alpha}$ is adapted (and flat).
\end{assumption}

Using the arguments of \cite[Thm. 3.1]{MR717614}, we show in Appendix~\S\ref{append} 
that $\psi_{\bullet}$ is a group isomorphism. A key consequence is, as in the 
case of \cite{MR717614}, that $A_\bullet(X^\circ)\otimes \Q$ can then be endowed with a compatible ring structure
(as long as Assumption~\ref{assume} is satisfied). As a result, 
as an element of $A_{n-k-\ell}(X^\circ)$, the product 
$c_k(\F^\circ_{\mathcal C}) \cdot c_\ell(\G^\circ_{\mathcal C})$ 
of two orbi-sheaves $\F^\circ_{\mathcal C}$ and $\G^\circ_{\mathcal C}$ is well-defined.
In particular, $c_1^2(\F^\circ_{\mathcal C})$ is a multilinear form on 
  $\Neron^1(X)_{\mathbb Q}^{n-2}$.

  To finish these preliminary discussions on the definition of orbi-Chern classes we now
  investigate a simple example where the orbifold data is given by a single chart. 
  
 \begin{example}
 \label{Ex:Easy}
 Let $(X, \frac{1}{m}D)$ be a log-smooth pair with $D$ being reduced and irreducible and let 
 $f: Y \to (X, \frac{1}{m}D)$ be an adapted morphism defining a smooth, orbi-\'etale structure 
 for $(X,\frac{1}{m}D)$. Assume that $f$ is totally ramified along $D$. Define 
 $D_Y:= f^{-1}D$. We are interested in computing $c_1$ and $c_2$ of 
 the orbi-sheaf $\{\O_{D_Y}\}$ in terms of (the numerical class of) $D$. 
 \begin{align*}
 c_1(\{ \O_{D_Y}  \})    & = \frac{1}{m} \cdot f_*( c_1(\O_{D_Y})\cap [Y])\\
                                  & = \frac{1}{m} \cdot f_*(D_Y)  \\
                                  &  = \frac{1}{m} \cdot D,               
 \end{align*}
 where by $D_Y$ (resp. $D$) we mean the class of $[D_Y]\in A_{n-1}(Y)$ (resp. $[D]\in A_{n-1}(X)$). Similarly we have:

 \begin{align*}
 c_2(\{ \O_{D_Y} \})     & = \frac{1}{m} \cdot f_*( c_2(\O_{D_Y})\cap [Y])\\
                                  & = \frac{1}{m} \cdot f_*(c_1(\mathcal O_Y(D_Y))^2\cap [Y])  \\
                                  &  = \frac{1}{m^2} D^2.
\end{align*}
  
 \end{example}

  \subsubsection{Slope and stability}
Having thus far established the definitions of orbi-sheaves and Chern classes, we can now define 
a notion of orbifold stability.

 \begin{defin}[Slope of orbi-sheaves]\label{def:Slope}
 Let $X$ be a normal projective variety and $D$ a $\Q$-effective divisor 
 with an orbi-structure $\mathcal C$ in codimension one. 
 Given an orbi-sheaf $\F_{\mathcal C}$
 we define its slope $\mu_{P}(\F_{\mathcal C})$ with respect to a nef divisor $P$ on $X$ by   
 $$
\mu_P(\F_{\mathcal C})= \frac{1}{\rank(\F_{\mathcal C})} \; c_1(\F_{\mathcal C}) \cdot [P]^{n-1},$$
where $[P]$ denotes the numerical class of $P$, cf.~Remark~\ref{codim11}.
 \end{defin}

 \begin{defin}[Stability of orbi-sheaves]
 In the setting of Definition~\ref{def:Slope}
 assume that the nef divisor $P$ verifies $P^{n-1}\not\equiv 0$. We say that a torsion-free orbi-sheaf 
 $\E_{\mathcal C}$ is semistable with respect to $P$, 
 if for every non-zero, torsion free orbi-subsheaf 
 $\F_{\mathcal C}\subset \E_{\mathcal C}$
 we have 
 \begin{equation}\label{eq:Slope}
 \mu_{P} ( \F_{\mathcal C}) \leq \mu_{P} ( \E_{\mathcal C}),
 \end{equation}
 In case of stability we require the inequality in~\eqref{eq:Slope} to be strict for any such subsheaf 
 $\F_{\mathcal C}$ with $\rank (\F_{\mathcal C} ) < \rank( \E_{\mathcal C} )$.
 
 \end{defin}

  \section{Behaviour of Chern classes and stability under change of orbi-structures}
 \label{section3-compatibility}
In this section, we investigate how Chern classes may change once we allow the orbifold structures to vary. Provided that change of structure is compatible in a sense to be defined, we show that such characteristic classes remain invariant
under change of orbifold structures. We will also establish similar results for the notion of slope stability.

\begin{set-up}\label{SetUp}
 Let $(X,D)$ be a pair equipped with two orbi-structures 
 $\mathcal C_1=\{  (U_{\alpha}, f_{\alpha}, X_{\alpha}  ) \}_{\alpha \in I}$ and 
 $\mathcal C_2=\{ (V_{\beta}, g_{\beta}, Y_{\beta}) \}_{\beta \in J}$. 
 Let $Z_{\alpha\beta}$ be 
 the normalization of the fibre 
 product $X_{\alpha}\times_{X} Y_{\beta}$ leading to the
 commutative diagram: 
 
 $$
  \xymatrix{
    Z_{\alpha\beta} \ar[rr]^{g_{\alpha\beta}}  \ar[d]_{f_{\alpha\beta}} && Y_{\beta} \ar[d]^{g_{\beta}} \\
      X_{\alpha} \ar[rr]^{f_{\alpha}}       && X.
  }
  $$
Having defined $h_{\alpha\beta}:= g_{\beta} \circ g_{\alpha\beta} $, assume that 
$\mathcal C':= \{ ( Z_{\alpha\beta} , h_{\alpha\beta} , U_{\alpha}\cap V_{\beta}) \}_{(\alpha,\beta)\in I\times J}$ is also an orbifold structure\footnote{
This can be easily guaranteed for example when $\mathcal C_1$ and $\mathcal C_2$ are strict (but this 
is not a necessary condition).}.
\end{set-up}

 \begin{defin}[Compatible orbi-sheaves]\label{def:Compatible}
 In the situation of Set-up~\ref{SetUp} let $\F_{\mathcal C_1}$ and $\G_{\mathcal C_2}$ be two locally free
 orbi-sheaves (with respect to $\mathcal C_1$ and $\mathcal C_2$ respectively) and define the two orbi-sheaves
 $\F_{\mathcal C'}:= \{ f_{\alpha\beta}^* \F_{\alpha} \}$ and $\G_{\mathcal C'}:= \{ g_{\alpha\beta}^* \G_{\beta} \}$
 (with respect to $\mathcal C'$)\footnote{The fact that $\F_{\mathcal C'}$ and $\G_{\mathcal C'}$ 
 are orbi-sheaves can be checked by a straightforward diagram chasing and the universal property of fiber
 products.}. We say $\F_{\mathcal C_1}$ and $\G_{\mathcal C_2}$ are compatible 
 if $\F_{\mathcal C'}$ and $\G_{\mathcal C'}$ are orbifold isomorphic 
 (see Definition~\ref{def:OrbiMorph}).


 \end{defin}

 \begin{example}
  \label{ex:compatible}
 \textbf{(Compatibility of orbi-cotangent sheaves)} 
 Let $\mathcal C_1$ and $\mathcal C_2$ be two strict orbi-structures for a pair $(X,D)$. 
 Then the orbi-cotangent sheaves $\Omega^{1}_{\mathcal C_1}$ and 
 $\Omega^{1}_{\mathcal C_2}$  are compatible. 
 
 \end{example}

 \begin{set-up}\label{SetUp2}
 In Set-up~\ref{SetUp}, let $\widehat X_{\mathcal C_1}$ and 
 $\widehat Y_{\mathcal C_2}$ be the global Mumford
 covers associated to $\mathcal C_1$ and $\mathcal C_2$, respectively 
 (see the construction in Subsection~\ref{subsect:GlobalCover}) and 
 set $G_1:= \Gal(\widehat X_{\mathcal C_1}/X)$ and 
 $G_2:=\Gal(\widehat Y_{\mathcal C_2}/X)$. 
 Let $\widehat Z$ be the normalization of $X$ in a Galois field extension of $\CC(X)$ containing all the 
 function field extensions $\CC(Z_{\alpha\beta})$, and with $G_{\widehat Z}:= \Gal(\widehat Z/ X)$, 
 resulting in the commutative diagram:
 
 $$
  \xymatrix{
  \widehat Z \ar[rrrrrr]^{\widetilde f_2} \ar[dd]_{\widetilde f_1} \ar[drrrr] &&&&&& \widehat Y_{\mathcal C_2} \ar[d] 
  \ar@/^5mm/[dd]^{f_{\widehat Y_{\mathcal C_2}}}  \\
  &&&& Z_{\alpha\beta} \ar[rr]^{g_{\alpha\beta}} \ar[d]^{f_{\alpha\beta}} \ar[drr]^{h_{\alpha\beta}} && Y_{\beta} \ar[d]_{g_{\beta}}  \\
  \widehat X_{\mathcal C_1} \ar[rrrr] \ar@/_5mm/[rrrrrr]_{f_{\widehat X_{\mathcal C_1}}} &&&& X_{\alpha} \ar[rr]^{f_{\alpha}} && X
.}
$$
\end{set-up}

In the next proposition, we show that Chern classes for compatible orbi-sheaves are well-defined.

 \begin{propo}
 \label{prop:InvariantChern}
 \emph{ \textbf{(Invariance of Chern classes for compatible orbi-sheaves)} }
 
 \noindent
 Let $X$ be a normal projective variety and $D$ a $\Q$-effective divisor such that 
 $(X,D)$ has a set of orbi-structures $J=\{ \mathcal C_i  \}$ in codimension $k\geq 1$. 
 Let $\{ \F_{\mathcal C_i} \}$ be a collection of compatible locally free (or reflexive) orbi-sheaves.
  Then, as multilinear forms on $\Neron^1(X)_{\Q}^{n-k}$, the Chern classes 
  $c_k$ of $\F_{\mathcal C_i}$
  are all equal.
  
  \noindent
 If additionally, the structures $\mathcal C_i$ satisfy Assumption~\ref{assume}, then the powers $c_\ell^p$ of the Chern classes  of $\F_{\mathcal C_i}$, seen as multilinear forms on $\Neron^1(X)_{\Q}^{n-\ell p}$, are all equal provided that $\ell p \le k$. 
  \end{propo}

 \begin{proof}
 Let $\mathcal C_1$, 
 $\mathcal C_2 \in J$.  
 It suffices to show that the desired Chern classes 
 for $\F_{\mathcal C_1}$ and $\F_{\mathcal C_2}$ coincide
 with the assumption that both are locally free.
Let $X^\circ$ be the maximal open subset of $X$ over which 
 $\mathcal C_1= \{ (U_{\alpha}^\circ, f_{\alpha}^\circ, X_{\alpha}^\circ) \}$ and 
 $\mathcal C_2=\{ (U_{\beta}^\circ, g_{\beta}^\circ, Y_{\beta}^\circ) \}$ are defined. 
 In the situation of Set-up~\ref{SetUp2}
 after replacing $X$ by $X^\circ$ we have 
 
 $$
 \xymatrix{
  \widehat Z^\circ  \ar[rrr]^{\widetilde f_2} \ar[d]_{\widetilde f_1}  &&&  \widehat Y^\circ_{\mathcal C_2} \ar[d]^{f_{\widehat Y^\circ_{\mathcal C_2}}}  \\
  \widehat X^\circ_{\mathcal C_1}  \ar[rrr]^{f_{\widehat X^\circ_{\mathcal C_1}}}    &&&  X^\circ,
 }
 $$
 where $\widehat Z^\circ$, $\widehat X_{\mathcal C_1}^\circ$ and $\widehat Y_{\mathcal C_2}^\circ$ 
 play the role of $\widehat Z$, $\widehat X_{\mathcal C_1}$ and 
 $\widehat Y_{\mathcal C_2}$ in Set-up~\ref{SetUp2}.

 The isomorphism  
 between the two locally-free sheaves $\widetilde f_1^*(\widehat \F_{\mathcal C_1})$ and 
 $\widetilde f_2^*(\widehat \F_{\mathcal C_2})$  
 implies that 
 $$
 c_k(\widetilde f_1^*\widehat \F_{\mathcal C_1}) = c_k(\widetilde f_2^* \widehat \F_{\mathcal C_2}) 
 \in  A^k(\widehat Z^\circ).
 $$ 
 The first part of proposition now follows by using the projection formula
 for Chern classes, cf.~\cite[Thm.~3.2]{Fulton98}, together with the commutativity of the above diagram. The second part is entirely similar granted that Assumption~\ref{assume} ensures that the map $\psi_{\bullet}$ from the construction in \S~\!\ref{subsect:OrbiCherns} is a ring morphism.  
 \end{proof}


 
 Next we show that, for compatible orbi-sheaves, semistability is also a well-defined notion.
 
 \begin{propo}
 \label{prop:IndepSS}
 \emph{ \textbf{(Independence of semistability for compatible orbi-sheaves).} }

 \noindent
 Let $\mathcal C_1$ and $\mathcal C_2$ be two orbi-structures over $X$ in codimension one.
 Let $\F_{\mathcal C_1}$ and $\F_{\mathcal C_2}$ be any two compatible locally free 
 orbi-sheaves. 
 Then, the orbi-sheaf $\F_{\mathcal C_1}$
 is semistable with respect to a nef divisor $P\subset X$, if and only if 
 $\F_{\mathcal C_2}$ is semistable. 
 
 \end{propo}

 \begin{proof}
 Let $X^\circ$ be the locus of $X$ over which $\mathcal C_1$ and $\mathcal C_2$ are both defined.
 We will follow the notations and constructions in Set-up~\ref{SetUp2} over $X^\circ$.

 Assume that $\F_{\mathcal C_1}$ is $P$-semistable and that
 $\F_{\mathcal C_2}$ is not $P$-semistable, as orbi-sheaves. 
 Then, $\widehat \F_{\mathcal C_2}$
 is not semistable. This implies that $\widetilde f_2^*(\widehat \F_{\mathcal C_2})$ 
 is not semistable. Then from the isomorphism 
 $$
 \widetilde f_1^*(\F_{\mathcal C_1})  \cong  \widetilde f_2^*(\F_{\mathcal C_2})
 $$
 it follows that 
 $\widetilde f_1^*(\widehat \F_{\mathcal C_1})$ is not semistable; let $\G\subset \widetilde f_1^*(\widehat \F_{\mathcal C_1})$ be a destabilizing subsheaf. 
 Now, according to~\cite[Thm.~4.2.15]{HL10}, there exists a subsheaf 
 $\sH\subset \widehat \F_{\mathcal C_1}$ such that 
 $$
 \G = \widetilde f_1^*(\sH).
 $$
 Noting that $\mu_P(\F_{\mathcal C_1}) = \mu_P(\F_{\mathcal C_2})$ 
 (thanks to Proposition~\ref{prop:InvariantChern}), it follows that $\sH$
 properly destabilizes $\widehat \F_{\mathcal C_1}$, contradicting 
 the semistability assumption on $\F_{\mathcal C_1}$.

 \end{proof}

 \subsection{The second Chern class of a pair $(X,D)$}
 \label{Chern}
 In this section, we explain how to define the second Chern class of a mildly singular pair $(X,D)$. 
 
 \begin{notation}[Orbi-Chern classes of pairs]\label{not:ChernPairs}
 Let $J=\{ \mathcal C_i \}$ be the collection of strict, orbi-structures for a given pair 
 $(X,D)$ satisfying Assumption~\ref{assume}. Assume that members of $J$ are \'etale orbi-structures so that $c_2(\Omega_{\mathcal C_i}^{1})$, 
 $c_1^2(\Omega_{\mathcal C_i}^{1})$ are 
 independent of the choice of $i$. We define $c_2(X, D):=c_2(\Omega_{\mathcal C_i}^{1})$
 and $c_1^2(X,D):=c_1^2(\Omega_{\mathcal C_i}^{1})$. 
 \end{notation}
 
 \noindent
One can define the square of the first Chern class as well as the second Chern class of any pair $(X,D)$ that satisfies the assumptions of Theorem~\ref{thm:main-ineq}. Indeed,
\begin{enumerate}
\item[$\bullet$] the proof of Proposition~\ref{prop:OrbiLc} shows that Assumption~\ref{assume} is satisfied up to a strictly adapted cover $f:Y\to X$. 
Therefore, when $D^{\orb}\neq 0$,
it is possible to define first and second Chern classes of the orbifold cotangent sheaf
as cycles on the adapted cover $Y$ and then, by an abuse of notation, define $c_i(X,D)$, for $i=1,2$ (resp. $c_1(X,D)^2$), to denote 
their cycle theoretic pushforward on $X$, divided by the degree of $f$. 
\item[$\bullet$] Furthermore, the orbi-étalité of these covers  on $Y$ coupled with 
a slight generalization of the proof of Proposition~\ref{prop:InvariantChern} guarantees that these definitions are independent of the 
choice of the orbifold structure on $Y$ and the adapted morphism $f:Y\to X$. In particular,
for $i=1,2$, the cycle $c_i(X,D)$ (resp. $c_1(X,D)^2$) in $X$
is well-defined.
\end{enumerate}

 
 \begin{example}[The log smooth reduced case]
 \label{loglisse0}
 If $(X,D)$ is log smooth and $D$ is reduced, then one can check from residue exact sequence that $c_2(X,D)= c_2(\Omega_X)+K_X\cdotp D + c_2(\mathcal O_D)$. If we write $D= \sum D_i$ its decomposition into irreducible components, then $c_2(\mathcal O_D)= \sum_{i}D_i^2+\sum_{i< j}D_i \cdotp D_j=D^2-\sum_{i< j}D_i\cdotp D_j$. 
 \end{example}
 
  \begin{example}[The general log smooth case]
  \label{loglisse}
 If $(X,D)$ is log smooth with coefficients in $[0,1]\cap \mathbb Q$ and $D=\sum(1-\frac {b_i}{a_i})D_i$ is its decomposition into irreducible components, then $(X,D)$ admits an orbi-étale structure (defined over the whole $X$). In particular, one can define $c_1(X,D)$ and $c_2(X,D)$; let us check that one recovers the classical classes, cf e.g. \cite{Tian94}. 
 
 \noindent
 Given a (local) strict orbi-étale cover $f:Y\to (X,D)$ 
of degree $N:=\prod_i a_i$, we have an exact sequence
 \begin{equation}
 \label{exseq}
 0 \longrightarrow \Omega^{1}_{(Y,f,D)} \longrightarrow f^*\Omega^1_{X}(\log \, \ulcorner D \urcorner) \longrightarrow 
 \bigoplus_{i,k(i)} \mathcal O_{b_i  D_Y^{ik}}\longrightarrow 0
 \end{equation}
 where $D_Y^{ik(i)}$ is the collection of prime divisors defined by $f^*D_i$ and the sum runs over 
 all indices $i$ and $k(i)$ such that $a_i \neq + \infty$. As $c_1(\mathcal O_{b_i  D_Y^{ik}})\cap [Y]=b_i[ D_Y^{ik}]$ and 
 $f_* D_Y^{ik}=\frac N{a_i} D_i$, we get 

 \begin{align*}
 c_1(X,D)= & \frac 1N f_*\Big(c_1(\Omega^{1}_{(Y,f,D)}) \cap [Y]\Big)\\
 =& c_1\big(\Omega_{X}^1(\log \, \ulcorner D \urcorner)\big)-\sum_{i, k(i)} \frac {b_i}N f_*( D_Y^{ik}\cap [Y])\\
 =&c_1(K_X+D).
 \end{align*}
  For the computation of $c_2(X,D)$, we will need the identity  
\begin{equation}
\label{eq:c2}
c_2\Big( \bigoplus_i \mathcal O_{b_i  D_Y^{ik}}\Big) =\sum_{i,k}b_i^2 c_1(\mathcal O_Y( D_Y^{ik}))^2+ 
\sum_{\substack{i,j,k,l \\ i\leq  j , k<l}} b_ib_j c_1(\mathcal O_Y( D_Y^{ik(i)}))\cdotp c_1(\mathcal O_Y( D_Y^{jl(j)})).
\end{equation}
along with the standard formula
$$c_2(\Omega_{X}^1(\log \, \ulcorner D \urcorner))=c_2(\Omega_X)+ c_1(K_X)\cdotp \sum D_i+ \sum_{i< j}D_i\cdotp D_j +\sum_i D_i^2,$$
where as usual we don't distinguish between $D_i$ and its class $[D_i]\in \A_{n-1}(X)$. 
Relying on the exact sequence~\eqref{exseq} and the identity 
$$f_*\Big( \sum_{k,l}  c_1(\mathcal O_Y( D_Y^{ik}))\cdot c_1(\mathcal O_Y( D_Y^{jl})) \cap [Y]\Big)=\frac{N}{a_ia_j}D_i\cdot D_j,$$ 
we get
{\footnotesize
 \begin{align*}
 c_2(X,D)= & \frac 1N f_*\Big(c_2(\Omega^{1}_{(Y,f,D)}) \cap [Y]\Big)-\frac 1N f_*\Big(c_2( \bigoplus_{i; a_i\neq+\infty} \mathcal O_{b_i  D_Y^{ik}})\cap [Y]\Big)  -\sum_i c_1(K_X+ D ) \cdot \frac{b_i}{a_i}D_i \\
 =& c_2\big(\Omega^1_{X}(\log \, \ulcorner D \urcorner)\big)- \sum_i \frac {b_i^2}{a_i^2} D_i^2-\sum_{i<j} \frac{b_ib_j}{a_ia_j}  D_i\cdot  D_j -\sum_i c_1(K_X) \cdot \frac{b_i}{a_i}D_i\\
 &-\sum_i \big(1-\frac{b_i}{a_i} \big)\frac{b_i}{a_i} D_i^2-\sum_{i<j}  \big(\frac{b_i}{a_i}+\frac{b_j}{a_j}-2 \frac{b_ib_j}{a_ia_j}\big)D_i\cdot D_j\\
 =&c_2(\Omega_X^1)+c_1(K_X)\cdot D+\sum_i \big(1-\frac{b_i}{a_i}\big)D_i^2+\sum_{i<j}\big(1-\frac{b_i}{a_i}\big)\big(1-\frac{b_j}{a_j}\big)D_i\cdot D_j.
 \end{align*}  
 }
 Here, $i,j$ range among all indices (not just those with $a_i\neq +\infty$) unless specified otherwise.

 \end{example}
 
 The next example shows that unlike $c_1(X,D)$, the second Chern class $c_2(X,D)$ does not only depend on the class of linear equivalence of $D$, so that one needs to be very careful when one modifies the divisor.
 
  \begin{example}
Let $X=\mathbb P^2$, let $x\in X$, let $\pi:Y=\mathrm{Bl}_xX\to X$, and let $E\subset Y$ the exceptional divisor. Choose $H_1,H_2$ two hyperplanes such that $x\in H_1$ and $x\notin H_2$. Let us denote by $H_1'$ (resp. $H_2'$) the strict transform of $H_1$ (resp. $H_2$) by $\pi$. Then one has $\pi^*H_1=H_1'+E$ and $\pi^*H_2=H_2'$. 
As $\pi^*H_2$ and $\pi^*H_1$ are linearly equivalent, the formula in Example \ref{loglisse0} shows that $c_2(Y,\pi^*H_2)-c_2(Y,\pi^*H_1)=H_1'\cdotp E=1$.
 \end{example}  
 
 \subsection{Continuity of Chern numbers}
 The following intuitive result shows that the approximation procedure introduced in Proposition~\ref{prop:OrbiLc} will not affect the properties of Chern classes. More precisely, let $(X,D_m)$ be the klt pair from Proposition~\ref{prop:OrbiLc} with its orbifold structure $\mathcal C_{\underline \alpha(m)}$ in codimension $2$ (which is not the obvious structure associated to $D_m=D+\frac 1m H$). Then we have:
 
 \begin{propo}
 \label{prop:continuity}
 \emph{ \textbf{(Continuity of orbifold intersection numbers).} }
With the notations of Proposition \ref{prop:OrbiLc}, the orbifold structure $\mathcal C_{\underline \alpha(m)}$ for pair $(X,D_m)$ satisfies:
\begin{eqnarray*}
\lim_{m\to +\infty}c_1(X,D_m)&=&c_1(X,D) \\
\lim_{m\to +\infty}c_2(X,D_m)&=&c_2(X,D)
\end{eqnarray*}
as multilinear forms on $\mathrm N^1(X)_{\mathbb Q}^{n-1}$ and $({\rm Nef}(X)_{\mathbb Q})^{n-2}$, respectively.
 \end{propo}

 \begin{proof}
 Recall that $D_m=D+\frac 1m H$, and as $c_1(X, D_m)=c_1(K_X+D_m)$ (see, for instance,~\cite[Cor. 3.9]{CKT}), the first identity follows. 
 
 To prove the second equality, we will assume that $D$ is reduced; as $D^{\orb}$ is independent of $m$, the fractional 
 case follows from identical arguments. 
 We shall utilize the 
 orbi-\'etale structure constructed in Proposition~\ref{prop:OrbiLc} and we will follow the notations introduced 
 in this construction with $U_{\underline{\alpha}(m)}= V_{\underline{\alpha}(m)}$ 
 (see the proof of Proposition~\ref{prop:OrbiLc}). 
 Let us also decompose $g_{\underline{\alpha}(m)}$
 into orbi-\'etale morphisms
 $t_{\underline{\alpha}(m)}: X_{\underline{\alpha}(m)}  \to  (W_{\underline{\alpha}(m)}, \frac{1}{m}A_{W_{\underline{\alpha}(m)}})$ 
 and $u_{\underline{\alpha}(m)}: Y_{\underline{\alpha}(m)} \to 
 (X_{\underline{\alpha}(m)}, (1-1/m)D_{X_{\underline{\alpha}(m)}})$:
 
  $$
 \begin{xymatrix}{
    Y_{\underline{\alpha}(m)} \ar@/^12mm/[rrrrrrrr]^{f_{\underline{\alpha}(m)}}   \ar@/^6mm/[rrrrrr]^{g_{\underline{\alpha}(m)}} \ar[rrr]^{u_{\underline{\alpha}(m)}} &&& X_{\underline{\alpha}(m)}  \ar@/^6mm/[rrrrr]^{d_{\underline{\alpha}(m)}}
     \ar[rrr]^{t_{\underline{\alpha}(m)}} &&& W_{\underline{\alpha}(m)} \ar[rr]^{h_{\underline{\alpha}(m)}} && U_{\underline{\alpha}(m)},
     }
    \end{xymatrix} 
 $$ 
 where $A_{W_{\underline{\alpha}(m)}}:= h_{\underline{\alpha}(m)}^{-1}(A)$ and $D_{X_{\underline{\alpha}(m)}}:=d_{\underline{\alpha}(m)}^{-1}(D)$. 
 Define $d_{\underline{\alpha}(m)}:= t_{\underline{\alpha}(m)}\circ h_{\underline{\alpha}(m)}$.
 By Proposition~\ref{prop:OrbiLc}
 we know that $f_{\underline{\alpha}(m)}^*(D)$ is snc and therefore we have: 
 
 \begin{equation*}
 0 \to  \Omega^1_{(Y_{\underline{\alpha}(m)}, f_{\underline{\alpha}(m)},  D_m )}  \to 
   u_{\underline{\alpha}(m)}^*\big( \Omega^1_{(X_{\underline{\alpha}(m)}, d_{\underline{\alpha}(m)}, \frac{1}{m}A)} \log (d_{\underline{\alpha}(m)}^*D) \big) \to \bigoplus \O_{D_{Y_{\underline{\alpha}(m)}}^{ij}}  \to 0,
 \end{equation*}
 where $\{D_{Y_{\underline{\alpha}(m)}^{ij}}\}_j$ are irreducible components of $f_{\underline{\alpha}(m)}^*(D_i)$.
 Let $\F_{\mathcal C_{\underline{\alpha}(m)}}$ and $\G_{\mathcal C_{\underline{\alpha}(m)}}$ be 
 the two orbi-sheaves associated to $u_{\underline{\alpha}(m)}^*\big( \Omega^1_{(X_{\underline{\alpha}(m)}, d_{\underline{\alpha}(m)}, \frac{1}{m}A)} \log (d_{\underline{\alpha}(m)}^*D) \big)$ and $\bigoplus \O_{D_{Y_{\underline{\alpha}(m)}}^{ij}} $, 
 respectively, so that 
 
 \begin{equation}
 \label{eq:mond}
 c_2(X, D_m) = c_2(\F_{\mathcal C_{\underline \alpha(m)}}) - \frac{1}{m} \big( (K_X+D_m)\cdot D \big)
         - c_2(\G_{\mathcal C_{\underline \alpha(m)}})
 \end{equation}
 (see Example~\ref{Ex:Easy}). 
 From the equality~\eqref{eq:mond} it is clear that it suffices to prove the following two claims.

 \begin{claim}\label{claim:big1}
 $\lim _{m\to \infty} c_2(\F_{\mathcal C_{\underline \alpha(m)}})= c_2(X,D)$.
 \end{claim}
 
 \begin{claim}\label{claim:big2}
 $\lim_{m\to \infty} c_2(\G_{\mathcal C_{\underline \alpha (m)}}) =0$.
 \end{claim}
 
\begin{proof}[Proof of Claim~\ref{claim:big1}]
 We use the exact sequence 
 $$
 0 \to \Omega^1_{(X_{\underline \alpha (m)}, d_{\underline \alpha (m)}, \frac{1}{m}A )} \to 
    \Omega^1_{(X_{\underline \alpha (m)}, d_{\underline \alpha (m)} , \frac{1}{m} A)} \log (d_{\underline \alpha (m)}^*D) 
       \to  \bigoplus \O_{D_{X_{\underline \alpha (m)}}^{ij} } \to 0 ,
    $$
where $D_{X_{\underline \alpha (m)}}^{ij}$ are irreducible components of $d_{\underline \alpha (m)}^*(D_i)$. 
From this sequence it follows that the equality  
$$
c_2(\F_{\mathcal C_{\underline \alpha (m)}}) = c_2(X, \frac{1}{m} A) + c_2(\sH_{\mathcal C_{\underline \alpha (m)}})
     + (K_X+ \frac{1}{m}A)\cdot D,
$$
holds, where 
$\sH_{\mathcal C_{\underline \alpha (m)}}$ is the orbi-sheaf associated to $\bigoplus \O_{D_{X_{\underline \alpha (m)}}^{ij}}$. 
Next, we use the fact that $t_{\underline{\alpha}(m)}$ is flat and unbranched at the generic point of $D$
together with standard Chern class calculations to show that 
$$
c_2(\sH_{\mathcal C_{\underline{\alpha}(m)}}) =  c_2( \sH'_{\mathcal C_{\underline{\alpha}(m)}} ),
$$
where $\sH'_{\mathcal C_{\underline{\alpha}(m)}}$ is the orbi-sheaf associated to $\bigoplus_i \O_{D^i_{W_{\underline{\alpha}(m)}}}$, with 
$ D_{W_{\underline{\alpha}(m)}} := h_{\underline{\alpha}(m)}^*(D) =
 \sum D^i_{W_{\underline{\alpha}(m)}}$.

On the other hand, we have
$$
c_2(X)  +  K_X\cdot D +  c_2(\sH'_{\mathcal C_{\underline{\alpha}(m)}}) = c_2(X,D).
$$

Therefore, to prove Claim~\ref{claim:big1} it suffices to show the following assertion. 

\begin{subclaim}\label{subclaim} 
$\lim_{m\to \infty} c_2(X, \frac{1}{m} A) = c_2(X).$ 

\end{subclaim}

\noindent
\emph{Proof of Subclaim~\ref{subclaim}.} We use the exact sequence 
\begin{equation}\label{exactly}
 0 \to \Omega^1_{(X_{\underline{\alpha}(m)}, t_{\underline{\alpha}(m)}, \frac{1}{m} A )}
\to  d^{[*]}_{\underline{\alpha}(m)} \Omega^{[1]}_{U_{\underline{\alpha}(m)}} \log (A)
\to \bigoplus_i \O_{\widehat{A}^i_{X_{\underline{\alpha}(m)}}} \to 0,
\end{equation}
where $\widehat{A}^i_{X_{\underline{\alpha}(m)}} = (m-1)A^i_{X_{\underline{\alpha}(m)}}$, 
$A^i_{X_{\underline{\alpha}(m)}}$ being the irreducible components of 
$d^{-1}_{\underline{\alpha}(m)}(A)$.

Let $\sA_{\mathcal C_{\underline{\alpha}(m)}}$ be the orbi-sheaf associated to 
$\bigoplus \O_{\widehat A^i_{X_{\underline{\alpha}(m)}}}$. 
Using the exact sequence (\ref{exactly})
we have
$$
c_2 (X, \frac{1}{m} A) = c_2(X) + K_X\cdot A + c_2(\sA_{\mathcal C_{\underline{\alpha}(m)}})
- c_2(\sA_{\mathcal C_{\underline{\alpha}(m)}}) - (K_X+\frac{1}{m} A)\cdot \big( \frac{m-1}{m} \big)\cdot A.
$$
We can now establish Subclaim~\ref{subclaim} by taking $m\to \infty$.

This finishes the proof of Claim~\ref{claim:big1}.
\end{proof}

\begin{proof}[Proof of Claim~\ref{claim:big2}]

Chern class calculations in Example~\ref{loglisse},  Equation~\ref{eq:c2},
with $a_i= m$ and $b_i=1$ show that 

\begin{equation}\label{eq:finally}
c_2(\G_{\mathcal C_{\underline{\alpha}(m)}}) \leq \frac{1}{m^2} \cdot D^2,
\end{equation}
as multilinear forms on $\rm Nef(X)_{\mathbb Q}^{n-2}$.
Now, as $m\to \infty$, the right hand side of (\ref{eq:finally}) goes to zero and thus so
does $c_2(\G_{\mathcal C_{\underline{\alpha}(m)}})$.
\end{proof}

The proof of Proposition~\ref{prop:continuity} is now complete.
\end{proof}

 \section{Semistability of the orbifold tangent sheaf}
\label{section4-semistability}

\subsection{Metrics with conic and cusp singularities}
\label{mixed}
Given numbers $d_1, \ldots, d_k \in (0,1)$ and a number $k\le p \le n$ one can consider for $\mathbb D= \{z\in \mathbb C: |z|<1\}$ the following model Kähler metric on $\mathbb (\mathbb D^*)^{p}\times \mathbb D^{n-p}$:
\begin{equation}
\label{mod}
\om_{\rm mod}= \sum_{j=1}^k\frac{idz_j \wedge d\bar z_j}{|z_j|^{2d_j}}+\sum_{j=k+1}^p \frac{idz_j \wedge d\bar z_j}{|z_j|^{2}\log^2|z_j|^2}+\sum_{j>p}i dz_j \wedge d\bar z_j
\end{equation}
The metric $\om_{\rm mod}$ is called the standard metric with mixed conic and cusp singularities along $D:=\sum_{j=1}^kd_j[z_j=0]+\sum_{j=k+1}^p [z_j=0]$. It has cone singularities with cone angles $2\pi (1-d_j)$ along $[z_j=0]$ for $1\le j \le k$ and cusp singularities along $[z_j=0]$ for $k+1\le j \le p$. \\

Now, if $X$ is a Kähler manifold and $D= \sum_i d_i D_i$ a divisor with simple normal crossings support and coefficients $d_i\in [0, 1]$, one says that a Kähler metric $\om$ on $X \smallsetminus \supp(D)$ has mixed conic and cusp singularities along $D$ if for any $x\in X$ and any open set $\Omega \ni x$ endowed with local holomorphic coordinates $(z_1, \ldots, z_n)$ such that the pair $(\Omega, D)$ is isomorphic to $(\mathbb (\mathbb D^*)^{p}\times \mathbb D^{n-p}, \sum_{j=1}^kd_j[z_j=0]+\sum_{j=k+1}^p [z_j=0])$, there exists a constant $C>0$ such that under that isomorphism, one has 
$$C^{-1}\om_{\rm mod} \le \om \le C\om_{\rm mod}$$
on $\Omega \smallsetminus \supp(D)$. \\

Given a compact Kähler manifold $X$ and a divisor $D=\sum_{j=1}^p d_j D_j$ with coefficients $d_j\in [0, 1]$ and simple normal crossings support, it is easy to give examples of such metrics. Moreover, if the real cohomology class $c_1(K_X+D)\in H^2(X, \mathbb R)$ is assumed to be Kähler (that is, it contains a Kähler metric) then there exists a unique Kähler metric $\om$  on $X\smallsetminus \supp(D)$ with mixed conic and cusp singularities along $D$ such that $\Ric \om = -\om$. This is the content of \cite{G12}, \cite[Thm. 6.3]{GP}. More precisely, it is shown there that any (weak) solution $\om_{X}+\ddc \vp$ of a Monge-Ampère equation of the type
$$(\om_X+\ddc \vp)^n= \frac{e^{\vp+F}}{\prod_{j=1}^p |\sigma_j|^{2d_j}}\, \om_X^n$$
has mixed conic and cusp singularities along $D$. Here, $\om_X$ is a reference Kähler form on $X$, $F\in \mathcal C^{\infty}(X)$, and $\sigma_j$ is the canonical section of $D_j$, measured with respect to an arbitrary smooth hermitian metric $|\cdotp |$ on $\mathcal O_X(D_j)$ and $\vp \in \mathcal E(X, \om_X)$ is the unknown function, cf \cite{GZ07} for the definition of the latter functional space.   \\

Let us conclude this section by explaining how the model metric $\om_{\rm mod}$ from \eqref{mod} pulls-back to ramified covers along $D$ when $D$ has fractional coefficients. More precisely, if one writes $d_j = 1-b_j/a_j$ and if  $f_0: \mathbb D^n \to \mathbb D^n$ is defined by $f_0(w_1, \ldots, w_n)= (w_1^{a_1}, \ldots, w_k^{a_k}, w_{k+1}, \ldots, w_n)$, then 
$$f_0^*\om_{\rm mod}= \sum_{j=1}^k |z_j|^{2(b_j-1)}i dz_j \wedge d\bar z_j+\sum_{j=k+1}^p \frac{idz_j \wedge d\bar z_j}{|z_j|^{2}\log^2|z_j|^2}+\sum_{j>p}i dz_j \wedge d\bar z_j$$
In particular, in the case where $D$ has standard coefficients (that is, $b_j=1$) and $\lfloor D \rfloor =0$, $f_0^*\om_{\rm mod}$ is equal to the euclidian metric. One sometimes say that in that case, $\om_{\rm mod}$ is smooth \textit{in the orbifold sense}. 

\noindent
In general though, $f_0^*\om_{\rm mod}$ will have zeros (and not poles anymore) along $\sum_{j=1}^kd_j[z_j=0]$. A useful observation is that  $f_0^*\om_{\rm mod}$ induces a continuous hermitian semipositive metric on the trivial vector bundle on $\mathbb D^n$ generated by $$z_1^{1-b_1}\frac{\d}{\d z_1} , \ldots, z_k^{1-b_k}\frac{\d}{\d z_k}, z_{k+1}\frac{\d}{\d z_{k+1}}, \ldots, z_{p}\frac{\d}{\d z_{p}},\frac{\d}{\d z_{p+1}}, \ldots, \frac{\d}{\d z_{n}}$$
which is degenerate precisely along $f_0^*\lfloor D \rfloor$. \\

\noindent
Note that one has a similar phenomenon if instead of choosing this very particular cover, one chooses a cover of the form $w \mapsto (w_1^{N}, \ldots, w_k^{N}, w_{k+1}, \ldots, w_n)$ where $N$ is divisible by any of the $a_j$'s. In that case, the pull-back of $\om_{\rm mod}$ by $\gamma$ is equal to $ \sum_{j=1}^k |z_j|^{2(Nb_j/a_j-1)}i dz_j \wedge d\bar z_j+\sum_{j=k+1}^p \frac{idz_j \wedge d\bar z_j}{|z_j|^{2}\log^2|z_j|^2}+\sum_{j>p}i dz_j \wedge d\bar z_j$.

\subsection{Setting}
\label{setting}
Let $(X,D)$ be a $n$-dimensional projective log canonical pair.
Let us set as before $D = \sum_{i=1}^r (1-\frac{b_i}{a_i}) D_i$ for some positive integers $a_i,b_i$ satisfying $b_i<a_i$ and $(a_i,b_i)=1$. One allows the possibility $a_i=+\infty$. One considers a strong log resolution $\pi:\wt X\to X$ of the pair $(X,D)$; one can write $\pi^*D= \wt D+\mbox{(exc. div.)}$ where $\wt D = \sum (1-b_i/a_i) \wt D_i$ is the strict transform of $D$. One has:
\begin{equation}
\label{canonical}
K_{\wt X}+\wt D=\pi^*(K_X+D)+E
\end{equation}
where $E=\sum c_j E_j$ is a $\pi$-exceptional divisor with coefficients $c_j \ge -1$. Finally, let us set $N= \mathrm{lcm} \{a_i, 1\le i \le r\} $ and choose a sufficiently ample divisor $H$ on $\wt X$; Kawamata's construction allows us to get a cover adapted to $(\wt X, \wt D)$. More precisely, one can find is a finite morphism $f: \wt Y\to \wt X$ which is a ramified Galois cover of group $G$ and it satisfies the following properties:
\begin{enumerate}
\item $f$ is étale over the complement of $\sum_{a_i<+\infty}\wt D_i+H$;
\item The support of $f^*(\wt D+H)$ has simple normal crossings;
\item Near any point $y_0\in \wt Y$, there exist a $G$-invariant open set $\Omega_0\ni y_0$, a system of coordinates $(w_k)$ centered at $y_0$, a system of coordinates $(z_k)$ near $f(y_0)$ and an integer $p=p(y_0)$ such that with respect to these coordinates, the map $f$ can be locally expressed as
$$f(w_1, \ldots, w_n)=(w_1^N, \ldots, w_p^N, w_{p+1}, \ldots, w_n)$$
where for each $1\le k \le p$, $(z_k=0)$ is a local equation (near $\gamma(y_0)$) of one of the components of $\sum_{a_i<+\infty}\wt D_i+H$.\\
\end{enumerate}

\noindent
Note that we have chosen that particular cover (with equal ramification index along all the divisors) only to simplify the notations, as any smooth adapted cover would have worked for what follows. Let us set 
$$D'=f^*\wt D= \sum_{a_i<+\infty} \left(N- \frac{Nb_i}{a_i}\right) \cdotp D_i'+\sum_{a_i=+\infty}D_i' $$ and $f^*H=NH'$.  With these notations, the divisors $D_i'$'s and $H$ are reduced. With these notations, the ramification divisor of $f$ becomes $(N-1)\sum D'_i+(N-1)H'$.  By the ramification formula, one gets:
$$K_{\wt Y}+ \sum\left(1-\frac{Nb_i}{a_i}\right)D_i'= f^*\left(K_{\wt X}+\wt D+\left(1-\frac{1}{N}\right)H\right)$$
or equivalently
$$K_{\wt Y}+ \sum\left(1-\frac{Nb_i}{a_i}\right)D_i'+(1-N)H'-f^*E= f^*\pi^*\left(K_X+D\right)$$
Here, the index $i$ varies over all possible indices (and not just those for which $a_i\neq+\infty)$. To lighten notation, let us set $B=\sum e_i B_i$ where $B_i$ is either one of the divisors $D_i'$  in which case $e_i:=1-Nb_i/a_i$ or $B_i=H'$ in which case $e_i:=1-N$. One gets a divisor $B$ with \textit{integral} coefficients and simple normal crossings support which satisfies 
\[K_{\wt Y}+B=f^*\pi^*\left(K_X+D\right)+f^*E.\] 
Note that neither $B$ nor $-B$ is effective in general (unless either $\lceil D \rceil = 0$ in which case $B \le 0$ or $D$ is reduced in which case $f=\mathrm{Id}_{\wt X}$ and $B=\wt D \ge 0$). On $\wt Y$, the orbifold tangent sheaf of the pair $(\widetilde X,\widetilde D)$, i.e. $\sT_{(\wt Y,f, \widetilde D)}$,  coincides with (the sheaf of sections of) the vector bundle defined as the locally free $\mathcal O_{\wt Y}$-module generated by $$z_1^{e_1}\frac{\d}{\d z_1}, \ldots, z_p^{e_p}\frac{\d}{\d z_p}, \frac{\d}{\d z_{p+1}}, \ldots, \frac{\d}{\d z_n}$$
whenever $B$ is locally given by $(z_1^{e_1}\cdots z_p^{e_p}=0)$ \--- up to relabelling the coefficients $e_i$. In order to lighten notation slightly, we will denote by $T_{\wt Y}(-\log B)$ the latter vector bundle and identify it freely with its associated (locally free) sheaf of sections.

\subsection{The semistability theorem}
We can now prove 
the slope-semistability of orbi-tangent sheaves for minimal pairs of log-general type, noting that 
this theorem holds irrespective of the choice of the orbifold structure, consisting of adapted or 
strictly adapted charts.

\begin{theorem}[Semistability of orbi-tangent sheaves]
\label{thm:MainSS}

In the setting \ref{setting}, assume that $K_X+D$ is nef and big. Then $\sT_{(\wt Y, f, \wt D)}$ is semistable with respect to $f^*\pi^*(K_X+D)$. 
\end{theorem}

\begin{remark}
In the course of the proof, we actually do not use the bigness assumption. However, semistability with respect to an arbitrary nef class is not a very meaningful notion. 
\end{remark}


\begin{proof}
Let $\sigma_i$ (resp. $t_j$) a section of $\mathcal O_X(\wt D_i)$ (resp. $\mathcal O_X(E_j))$ cutting out $\wt D_i$ (resp. $E_j$). One chooses smooth hermitian metrics $h_i$ (resp. $h_j$) on these bundles and a Kähler form $\omxt$ on $\wt X$. Finally, let $\om_X\in c_1(K_X+D)$ be a smooth form (not necessarily semipositive).
Because of \eqref{canonical}, there exists a smooth volume form $dV$ on $\wt X$ such that $-\Ric(dV)+\sum_i (1-\frac{b_i}{a_i}) \Theta_{h_i}(\wt D_i)= \pi^*\om_X+ \sum_j c_j \Theta_{h_j}(E_j)$. \\


\noindent
For $t>0$, the cohomology class $\{\pi^*\om_X+t\omxt\}$ is Kähler; therefore, one can solve for any $\ep>0$ the following Monge-Ampère equation: 
\begin{equation}
\label{MA}
(\pi^*\om_X+t\omxt+\ddc \vp_{t, \ep})^n= \frac{\prod(|t_j|^2+\ep^2)^{c_j(1-t)}e^{\vp_{t,\ep}}dV}{\prod |\sigma_i|^{2\left(1-\frac{b_i}{a_i}\right)}}
\end{equation}
and obtain a Kähler current $\omte:=\pi^*\om_X+t\omxt+\ddc \vp_{t, \ep}$ that is smooth outside $\Supp(\wt D)$ and has mixed conic and cusp singularities along $\wt D$ \--- with cone angles $\frac{2\pi b_i}{a_i}$ along $\wt D_i$ if $a_i<+\infty$, or cusp singularities along $\wt D_i$ otherwise. This follows from \cite[Thm 6.3]{GP}, as explained in \S \ref{mixed}.\\

\noindent
Moreover, $\omte$ is an approximation of the Kähler-Einstein metric in the sense that: 
\begin{equation}
\label{KE}
\Ric \omte = -\omte+t \omxt-((1-t)\Theta_{\ep}+t \Theta(E))+[\wt D]
\end{equation}
where $\Theta_{\ep}:=\sum_j c_j\left( \frac{\ep^2|D't_j|^2}{(|t_j|^2+\ep^2)^2}+\frac{\ep^2 \Theta_{h_j}(E_j)}{|t_j|^2+\ep^2}\right)$ is an approximation of $[E]$, the current of integration along $E$ and $\Theta(E):=\sum_j c_j \Theta_{h_j}(E_j)$. The reason we change $c_j$ into $(1-t)c_j$ is to prevent $\vp_{t,\ep}$ from getting unbounded along $\sum_{c_j=-1}E_j$ when $\ep\to 0$, $t>0$ being fixed. Pulling back $\omte$ by $f$, one gets a positive current $f^*\omte$ which satisfies: 

\vspace{2mm}
\begin{enumerate}
\item\label{item:PB1} $f^*\omte$ is smooth outside $\Supp(D'+H')=\Supp(B)$.
\vspace{2mm}
\item\label{item:PB2} $f^*\omte$ has cone angles $\frac{2\pi Nb_i}{a_i}$ along $D'_i$ (if $a_i<+\infty$)  and $2\pi N$ along $H'$; and $f^*\omte$ has cusp singularities along $\lceil D' \rceil$. Equivalently, $f^*\omte$ has cone angles $2\pi(1-e_i)$ along $B_i$ if $e_i<1$ and cusp singularities along $B_i$ if $e_i=1$.
\vspace{2mm}
\item\label{item:PB3} $\Ric(f^*\omte)=-f^*\omte+tf^*\omxt-(1-t)f^*\Theta_{\ep}-tf^*\Theta(E)-[B].$
\vspace{2mm}
\end{enumerate}
This is a consequence of the third property of the Kawamata cover $\gamma$ recalled above, cf last paragraph of \S \ref{mixed}. Let us recall what Item~\ref{item:PB2} means. Denoting $B_-$ (resp. $B_+$) the union of components of $B$ with negative coefficients (resp. positive coefficients, or equivalently coefficients equal to $1$) and choosing a chart $\Omega\subset \wt Y$ along with local coordinates $(z_1, \ldots, z_n)$ for which  $\mathrm{Supp}(B_-) \cap \Omega=(z_1\cdots z_p=0)$ and $B_+\cap \Omega =(z_{p+1} \cdots z_r =0)$, then $f^*\omega_{t,\ep}$ is quasi-isometric (on $\Omega$) to the model metric below:

\begin{equation}
\label{model}
\sum_{k=1}^p |z_k|^{-2e_k} idz_k \wedge d\bar z_k+\sum_{k=p+1}^r \frac{i dz_k \wedge d\bar z_k}{|z_k|^2 \log^2 \frac{1}{|z_k|^2}} +\sum_{k=r+1}^n i dz_k \wedge d\bar z_k
\end{equation}
Recall that for $1 \le k \le p$ one has $e_k\le 0$ so that the model metric above has \textit{zeros} along $B_-$ and "\textit{poles}" along $B_+$. \\

From now on, one will set $\om:=f^*\omte$. Because of~\ref{item:PB1}-\ref{item:PB2}, $\om$ induces a bounded hermitian metric on $T_{\wt Y}(-\log B)$ which is smooth outside $\Supp(B)$.
The strategy is to use this metric to derive the semistability property of the bundle; it is inspired from \cite{CP2} and \cite{GSS}. \\
 
If $\mathscr F$ is a reflexive subsheaf of $ T_{\wt Y}(-\log B)$ of rank $p$, then it induces a generically injective map of sheaves $(\Lambda^p \mathscr F)^{**}\longrightarrow \Lambda^p  T_{\wt Y}(-\log B)$. Setting $L:=(\Lambda^p \mathscr F)^{**}$ to be the determinant of $\mathscr F$, one gets a non-zero section $u$ of $\Lambda^p T_{\wt Y}(-\log B)\otimes L^{-1}$. As $\om$ is smooth (and Kähler) outside $\Supp(B)$, it induces a smooth hermitian metric $h$ on the vector bundle $T_{\wt Y}(-\log B)$ on that locus. Let us choose some fixed smooth metric $h_L$ on $L$. This enables one to compute the squared norm $|u|^2$ of the section $u$ with respect to $h\otimes h_L^{-1}$; this is a bounded function on $\wt Y$. In order to apply the vector bundle version of Lelong-Poincaré formula to $\ddc \log |u^2|_{h\otimes h_L^{-1}}$, one needs some additional regularization processes.

First, as $|u|^2$ may not be smooth along $\Supp (B)$, one introduces $(\chi_{\eta})_{\eta>0}$ a family of cut-off functions for $\Supp(B)$; one can arrange that the $L^1$ norm of $\ddc \chi_{\eta}$ (computed with respect a smooth metric on $\wt Y$) tends to zero when $\eta\to 0$, cf e.g. \cite[\S 9]{CGP}. Then to prevent $\log |u|^2$ from being unbounded (below), one chooses a constant $\lambda>0$, and evaluate the smooth quantity (outside $\Supp(B)$):

\begin{equation}
\label{eqt}
\ddc \log(\ud) = \frac{1}{\ud}\left( |D'u|^2-\frac{|\la D'u,u\ra |^2}{\ud}-\la \Theta_{h \otimes h_L^{-1}}(\Lambda^p T_{\wt Y}(-\log B) \otimes L^{-1} )u,u\ra \right) 
\end{equation}
Here, $D'$ is the $(1,0)$-part of the Chern connection of $h\otimes h_L^{-1}$ and for a (vector-valued) $(1,0)$-form $\alpha$, one uses the notation $|\alpha|^2:=\alpha \wedge \bar \alpha$. Outside the support of $B$, one can identify $T_{\wt Y}(-\log B)$ with $T_{\wt Y}$; moreover, one will identify in the following $\Theta_h(\Lambda^p T_{\wt Y})$ with $\Theta_h(\Lambda^p T_{\wt Y}) \otimes \mathrm{Id}_{L^{-1}}$. Finally, as $|\la D'u,u\ra |^2 \le |D'u|^2 \cdotp |u|^2$, one deduces from the equality \eqref{eqt} above: 
\begin{equation}
\label{ineq}
\ddc \log(\ud) \ge \frac{|u|^2}{\ud} \left( \Theta_{h_L}(L)-\frac{\la\Theta_h(\Lambda^p T_{\wt Y})u,u\ra}{|u|^2} \right) 
\end{equation}
Wedge inequality \eqref{ineq} with $ \chi_{\eta} \,\om^ {n-1}$ and integrate on $\wt Y$:\\
\begin{equation}
\label{slope}
 - \int_{\wt Y} \log(\ud) \, \ddc \ce \wedge \om^ {n-1} \ge \int_{\wt Y}  \frac{\ce|u|^2}{\ud} \left( \Theta_{h_L}(L)-\frac{\la\Theta_h(\Lambda^p T_{\wt Y})u,u\ra }{|u|^2} \right) \wedge \om^ {n-1}   
\end{equation}

\vspace{2mm}

Recall that $t,\ep$ being fixed, $\om$ is equivalent to the model metric given in \eqref{model}. In particular, it follows that $\pm \ddc \chi_{\eta} \wedge \om^{n-1}$ is uniformly dominated by the volume form of a metric with cusp singularities along $\Supp(B)$ \--- whose mass is finite. As $\chi_{\eta}$ converges smoothly to zero outside $\Supp(B)$, Lebesgue's dominated convergence theorem shows that the left hand side converges to $0$ when $\eta$ tends to $0$, $\lambda>0$ being fixed. By the same token, the first integral in the right hand side converges to $\int_{\wt Y} \Theta_{h_L}(L) \wedge \om^{n-1}$ when $\eta,\lambda$ approach zero. As the potentials of $\om$ have finite energy (cf \cite[Prop. 2.3]{G12}), this integral is nothing but the intersection number $L \cdotp \{\om\}^{n-1}=c_1(\mathscr F) \cdotp \left( f^*\pi^*(K_X+D)+t f^*\{\omxt\}\right)$.
So one is left to estimating the second integral in the right hand side. \\

By the symmetries of the curvature tensor, one has the following identity (outside $\Supp(B)$): $n\Theta_{\om}(T_{\wt Y}) \wedge \om^{n-1} =  (\sharp \Ric \om) \, \om^n$ where $\sharp:\Omega_{\wt Y}^{0,1}\to T^{1,0}_{\wt Y}$ is the standard isomorphism induced by $\om$ which extends to an operator $\Omega_{\wt Y}^{1,1}\to \mathrm{End}(T_{\wt Y})$. As a result, $$n\Theta_{h}(\Lambda^p T_{\wt Y})\wedge \om^{n-1}= (\sharp \Ric \om)^{\wedge p} \, \om^n$$ where for any endomorphism $f$ of an $n$-dimensional vector space $V$, one defines $f^{\wedge p}:\Lambda^pV\to \Lambda^pV$ by $f^{\wedge p}(v_1 \wedge \ldots \wedge v_p):=\sum_{k=1}^p v_{1}\wedge \ldots \wedge v_{k-1}\wedge f(v_k) \wedge v_{k+1}\wedge \ldots \wedge v_n$. These two operations preserve positivity (of $(1,1)$ forms and hermitian endomorphisms respectively). Moreover, it is easily checked that for any $(1,1)$-form $\alpha$, one has $\mathrm{tr}_{\rm End}(\sharp \alpha)  =\tr_{\om} \alpha$, and if $f$ is any positive semidefinite endomorphism of $V$, then $\tr_{\rm End}(f^{\wedge p}) \le {n \choose p} \,\tr_{\rm End}(f)$. \\

Thanks to item \ref{item:PB3} in the properties of $\om=f^*\omte$ one sees that outside $\Supp(B)$, $$(\sharp \Ric \om)^{\wedge p} = -p\mathrm{Id}_{\Lambda^pT_{\wt Y}}+t \sharp f^*(\omxt)^{\wedge p}-(1-t)(\sharp f^*\Theta_{\ep})^{\wedge p}-t \sharp f^*(\Theta(E))^{\wedge p}$$ 
Let $\omx$ be a reference Kähler form on $ \wt Y$; there exists $C>0$ such that $0\le f^*\omxt \le C \omx$, and therefore 
\begin{equation}
\label{maj}
\tr_{\rm End} (f^*\omxt)^{\wedge p} \, \om^n \le n C {n \choose p}\,  \omx \wedge \om^{n-1}
\end{equation}
As a consequence, $\int_{\wt Y} \frac{\la t (\sharp f^*\omxt)^{\wedge p}u,u\ra}{|u|^2}\cdotp \om^ {n}  \le t C' \int_{\wt Y}\omx \wedge \om^{n-1}$ which is independent of $\ep$ and tends to zero when $t$ goes to zero. The same argument shows that $\int_{\wt Y} \frac{\la t (\sharp f^*\Theta(E))^{\wedge p}u,u\ra}{|u|^2}\cdotp \om^ {n}$ converges to zero when $\ep$ and then $t$ approach zero. \\

Furthermore, $f$ is generically unramified along $\Supp(E)$ hence $f^*E$ has simple normal crossings support and $f^*\Theta_{\ep}$ is a standard approximation of the current of integration along $f^*E$. Write $f^*E=\sum c_j E_j'$; then $f^*\Theta_{\ep}=\sum c_j (\alpha_{j,\ep}+\beta_{j,\ep})$ with $ \alpha_{j,\ep}=\frac{\ep^2}{(|t_j'|^2+\ep^2)^2}\cdotp |D't_j'|^2$ and $\beta_{j,\ep}=\frac{\ep^2 }{|t_j'|^2+\ep^2} \cdotp f^*\Theta_{h_j}(E_j)$ where $t_j'=f^*t_j$. Until the end of this paragraph, one will drop the indexes $j$ and $\ep$ to lighten notation. One needs to evaluate the quantity $(\sharp(\alpha+\beta))^{\wedge p} \om^n$. The term $(\sharp \alpha)^{\wedge p} \om^n$ is positive and dominated by $n {n \choose p} \mathrm{Id}_{\Lambda^pT_{\wt Y}} \cdotp \alpha \wedge \om^{n-1}$. The term $(\sharp \beta)^{\wedge p} \om^n$ is dominated (in norm) by $\frac{C \ep^2 }{|t'|^2+\ep^2}\cdotp \mathrm{Id}_{\Lambda^pT_{\wt Y}} \cdotp f^*\omxt \wedge \om^{n-1}$. Therefore,
\begin{eqnarray*}
\left|\frac{\la\sharp(\alpha+\beta))^{\wedge p}u,u\ra}{|u|^2} \right| \om^n & \le & C \alpha \wedge \om^{n-1}+\frac{C \ep^2 }{|t'|^2+\ep^2}\cdotp f^*\omxt \wedge \om^{n-1}\\
& = &  C (\alpha+\beta) \wedge \om^{n-1}-C \beta \wedge \om^{n-1}+\frac{C \ep^2 }{|t'|^2+\ep^2}\cdotp  f^*\omxt \wedge \om^{n-1}\\
& \le & C (\alpha+\beta) \wedge \om^{n-1}+\frac{C' \ep^2 }{|t'|^2+\ep^2}\cdotp  f^*\omxt \wedge \om^{n-1}
\end{eqnarray*}
so that: 
$$\int_{\wt Y}\left|\frac{\la(\sharp f^*\Theta_{\ep})^{\wedge p}u,u\ra}{|u|^2}\wedge \om^{n-1}\right|\le C \int_{\wt Y}f^*\Theta_{\ep}\wedge \om^{n-1}+C\sum_j \int_{\wt Y}\frac{ \ep^2 }{|t'_j|^2+\ep^2}\cdotp  f^*\omxt \wedge \om^{n-1}$$
As $E$ is $\pi$-exceptional, the first integral on the right hand side equals $t E \cdotp \{\omxt\}$; in particular, it does not depend on $\ep$ and converges to $0$ when $t\to 0$. The term equals $C |G| \sum_j \int_{\wt X} \frac{ \ep^2 }{|t_j|^2+\ep^2}\cdotp\omxt\wedge \omte^{n-1}$ which converges to zero by Lemma \ref{int:zero} stated at the end of this proof. \\

Putting everything together, one obtains: \\
$$ \int_{\wt Y}  \frac{\ce|u|^2}{\ud}\frac{\la\Theta_h(\Lambda^p T_{\wt Y})u,u\ra }{|u|^2}  \wedge \om^ {n-1}   = I_1+I_2+I_3$$
where: $$I_1=-\frac p n \int_{\wt Y}\frac{\ce|u|^2}{\ud} \om^n, \quad I_2=\frac t n \int_{\wt Y}\frac{\ce|u|^2}{\ud}\frac{\la( \sharp f^*(\omxt-\Theta(E)))^{\wedge p}u,u\ra}{|u|^2}\wedge \om^{n-1}$$ and $$I_3=-\frac {1-t} n \int_{\wt Y}\frac{\ce|u|^2}{\ud}\frac{\la(\sharp f^*\Theta_{\ep})^{\wedge p}u,u\ra}{|u|^2}\wedge \om^{n-1}$$ 

\noindent
We claim that $I_2$ and $I_3$ converge to zero when $\ep, t$ go to $0$, say for $\eta=\lambda=0$. More precisely, one can dominate point-wise the absolute value of the integrands of $I_2$ and $I_3$ by an $(n,n)$-form $\alpha_{t,\ep}$ independent of both $\eta>0$ and $\lambda>0$ that satisfies $\lim_{t\to 0}\lim_{\ep \to 0} \int_{\widetilde Y}\alpha_{t,\ep}=0$. Dominated convergence enables to first pass to the limit when, in that order: $\eta, \lambda, \ep, t$ converge to zero.

Moreover, dominated convergence show that $I_1$ converges to  $-\frac p n\int_{\wt Y}\om^n$ when $\eta, \lambda$ converge to $0$. As the potentials of $\om$ have finite energy, this integral is nothing but $-\frac p n \{\om\}^n$ and this quantity (depending on $t$ only) converges to $-\frac p n ([f^*\pi^*(K_X+D)]^n)$ when $t\to 0$. Combining this with \eqref{slope}, one finds:
$$n  \, c_1( \mathscr F) \cdotp [f^*\pi^*(K_X+D)]^{n-1} \le -p \,  ([f^*\pi^*(K_X+D)]^n)$$
Now the determinant of the dual of $T_{\wt Y}(-\log B)$ is $K_{\wt Y}+B=f^*\pi^*(K_D+D)+f^*E$. As $E$ is $\pi$-exceptional, the intersection of $c_1(T_{\wt Y}(-\log B))$ with  $[f^*\pi^*(K_X+D)]^{n-1}$ is precisely $-([f^*\pi^*(K_X+D)]^n)$, hence semistability of $T_{\wt Y}(-\log B)$ follows. 
\end{proof}

\vspace{3mm}

\noindent
In the course of the proof above, we used the following result:

\begin{lemma}
\label{int:zero}
The positive current $\om_{t, \ep}$ solution of the Monge-Ampère equation \eqref{MA} satisfies, for each fixed $t>0$ and each index $j$:
$$\lim_{\ep \to 0} \int_{\wt X} \frac{\ep^2}{|t_j|^2+\ep^2} \omxt \wedge \om_{t,\ep}^{n-1}=0$$ 
\end{lemma}

\begin{remark}
\label{remark}
Note that because the potentials of $\om_{t, \ep}$ are unbounded, one cannot directly use an argument based on Chern-Levine-Nirenberg inequality as in \cite[Claim 9.5]{GGK}. Moreover, because of the conic part in \eqref{MA}, one cannot reproduce the arguments of \cite[Lem. 2.1]{GSS} based on \cite[Prop. 1.1]{GSS} as it would involve (among other things) having at hand a model conic/cusp metric with bisectional curvature bounded above and, as far as we know, such a metric has not yet been proved to exist. 
\end{remark}

\begin{proof}[Proof of Lemma~\ref{int:zero}]
Let $V_{\ep}:=\{|t_j|^2<\ep\}$. One has 
$$ \int_{\wt X \smallsetminus V_{\ep}} \frac{\ep^2}{|t_j|^2+\ep^2} \omxt \wedge \om_{t,\ep}^{n-1} \le \ep \int_{\wt X}\omxt \wedge \om_{t,\ep}^{n-1}$$ 
and this quantity is dominated by $\ep \{\omxt\} \cdotp \{\om_{t,\ep}\}^{n-1}$ which obviously converges to zero. Here we have used that because the potentials of $\omte$ have finite energy, the mass of the (mixed) Monge-Ampère products of $\omte$ is computed in cohomology. We are left to prove the following:
\begin{equation}
\label{inte}
\lim_{\ep \to 0} \int_{V_{\ep}} \omxt \wedge \om_{t,\ep}^{n-1}=0
\end{equation}
The first observation is that thanks to \cite[Thm. A]{GW}, the potentials $\vp_{t,\ep}$ of $\omte$ satisfy 
\begin{equation}
\label{estimate}
\vp_{t,\ep}= \sum_{a_i=+\infty}-\log \log^2 |\sigma_i|^2 +R_{t,\ep}
\end{equation}
where $\sup_{\wt X}|R_{t,\ep}| \le C_t$ for some constant $C_t>0$ independent of $\ep$. 
Moreover, one has uniform $\mathcal C^{k}$ estimates for $\vp_{t,\ep}$ on any compact subset of $\wt X \smallsetminus \supp(D+E)$; therefore any weak limit $\psi_t$ of $\vp_{t,\ep}$ (when $\ep$ approaches $0$) satisfies: 
\begin{enumerate}
\item[$\bullet$] $\psi_t$ is smooth on $\wt X \smallsetminus \supp(D+E)$, and satisfies the following equation on that locus:
\begin{equation}
\label{MA2}
(\pi^*\om_X+t\omxt+\ddc \psi_{t})^n= \frac{\prod |t_j|^{2c_j(1-t)}e^{\psi_{t}}dV}{\prod |\sigma_i|^{2\left(1-\frac{b_i}{a_i}\right)}}
\end{equation}
\item[$\bullet$] $\sup_{\wt X} \left|\psi_t - \sum_{a_i=+\infty}-\log \log^2|\sigma_i|^2\right|<+\infty$
\end{enumerate}
It follows from the second point and \cite[Prop. 2.3]{G12} that $\psi_t$ has finite energy with respect to any Kähler form $\om$ such that $\om+\ddc \psi_t \ge 0$. In particular, the equation \eqref{MA2} is satisfied on the whole $\wt X$ (as the Monge-Ampère of $\psi_t$ puts no mass on pluripolar sets). As \eqref{MA2} admits a unique solution (by comparison principle, cf \cite[Prop. 4.1]{BG}), all subsequential limits of $(\vp_{t, \ep})_{\ep>0}$ when $\ep\to 0$ agree, and therefore $\vp_{t,\ep}$ converges weakly to the solution $\vp_t$ of \eqref{MA2}.
We want to show that the convergence is actually strong. For that purpose, one observes that the quantity 
$$I(\vp_t, \vp_{t,\ep}):=\int_{\wt X} (\vp_t- \vp_{t,\ep})(\mathrm{MA}(\vp_{t,\ep})-\mathrm{MA}(\vp_t))$$ 
converges to zero when $\ep\to 0$ thanks to Lebesgue dominated convergence theorem (by \eqref{estimate}, $\vp_t- \vp_{t,\ep}$ is uniformly bounded). By \cite[Prop. 2.3]{BBEGZ}, this implies that $\vp_{t,\ep}$ converges strongly to $\vp_t$. In particular, the measures $(\omxt+\om_{\vp_{t,\ep}})^n= (\omxt+ \pi^*\om_X+t\omxt+\ddc \vp_{\ep,t})^n$ converge weakly to the non-pluripolar measure $(\omxt+\om_{\vp_{t}})^n$ when $\ep \to 0$, $t>0$ being fixed.\\

\noindent
Now, assume that \eqref{inte} does not hold. As $(\omxt+\om_{\vp_{t, \ep}})^n\ge \omxt \wedge \om_{\vp_{t, \ep}}^{n-1}$, this would imply that $$\limsup_{\ep \to 0} \int_{V_{\ep}} (\omxt+\om_{\vp_{t, \ep}})^n>0$$
In particular, denoting $M_t:=\{\omxt+ \pi^*\om_X+t\omxt\}^n$, one could find $\delta>0$ and a sequence $(\ep_k)_{k\ge 0}$ converging to $0$ such that for any $k\ge 0$:
$$\int_{\wt X\smallsetminus V_{\ep_k}} (\omxt+\om_{\vp_{t, \ep_k}})^n \le M_t - \delta$$
In particular, given $k_0\ge 0$, one would have for any $k \ge k_0$:
$$\int_{\wt X\smallsetminus V_{\ep_{k_0}}} (\omxt+\om_{\vp_{t, \ep_k}})^n \le M_t - \delta$$
As weak convergence of measures does not increase the mass, one would find:
$$\int_{\wt X\smallsetminus V_{\ep_{k_0}}} (\omxt+\om_{\vp_{t}})^n \le M_t - \delta$$
As this holds for any $k_0\ge 0$, one deduces:
$$\int_{\wt X\smallsetminus \{t_j=0\}} (\omxt+\om_{\vp_{t}})^n \le M_t - \delta$$
which contradicts the fact that $\vp_t$ has finite energy with respect to $(1+t)\omxt+\pi^*\om_X$. Therefore \eqref{inte} is proved, and the lemma follows. 
\end{proof}

%
%

 \subsection{Proof of Theorem~C}
 Let $f: Y\to (X, D= \sum d_i\cdot D_i)$ be a strictly adapted morphism, 
 provided for instance by Example~\ref{ex:QFactorial}. We note that $Y$ is not 
 necessarily lc. Let  $\pi: \widetilde X \to X$ be a strong log-resolution for the 
 pair $(X, D)$. Set $\widetilde D\subset \widetilde X$ to be the birational transform of 
 $D$ by the morphism $\pi$. Let $\widehat Y$ be the irreducible component of the normalization of 
 $(Y\times_{X} \widetilde X)$ inducing surjective morphisms $\widehat \pi: \widehat Y \to Y$, 
 $\widehat f: \widehat Y \to \widetilde X$ and the corresponding commutative diagram:

 $$
  \xymatrix{
    \widehat Y \ar[rrr]^{\widehat f}  \ar[d]_{\widehat \pi} &&& \widetilde X  \ar[d]^{\pi} & &&  \widetilde Y \ar[lll]_{\widetilde f}\\ 
      Y \ar[rrr]^{f}       &&& X &&&
  }
  $$
 Note that as $\pi$ is a projective birational morphism, the finite morphism 
 $\widehat f: \widehat Y \to \widetilde X$ is Galois. Let $
 \widetilde f: \widetilde Y\to (\widetilde X,\widetilde D)$
 be an adapted morphism such that the two orbi-cotangent sheaves 
 $\Omega^{1}_{(\widetilde{Y}, \widetilde f, \widetilde D)}$ and 
 $\Omega^{1}_{(\widehat{Y}, \widehat f ,\widetilde D)}$
 are compatible (see Example~\ref{ex:compatible}).

 Now, according to Theorem~\ref{thm:MainSS} we know that  
 $\Omega^{1}_{(\widetilde{Y},\widetilde f,  \widetilde D)}$ is semistable 
 with respect to $\pi^*(K_X+D)$. Therefore, by Proposition~\ref{prop:IndepSS}, 
 so is $\Omega^{1}_{(\widehat{Y}, \widehat f , \widetilde D )}$. We now claim that 
 $\Omega^{1}_{(Y, f, D)}$ is semistable with respect to $K_X+D$. 
 
 \noindent
 Otherwise, there exists a reflexive subsheaf $\F\subset \Omega^{1}_{(Y, f, D)}$
 such that 
 $$
 \mu_{f^*(K_X+D) }(\F)  >  \mu_{f^*(K_X+D)}(\Omega^{1}_{(Y, f ,D)}).  
 $$
 But then the subsheaf $\widehat \F \subset \Omega^{1}_{(\widehat Y, \widehat f, D)}$
 defined by 
 $$
 \widehat \F  : = (\widehat \pi^*\F) \cap \Omega_{(\widehat Y, \widehat f, D)}^{1},
 $$
 which verifies the equality $\mu_{\widehat \pi^*f^*(K_X+D)} (\widehat \F)= 
 \mu_{f^*(K_X+D)}(\F)$, destabilizes $\Omega^{1}_{(\widehat Y, \widehat f, \widetilde D)}$;
 a contradiction. 
 
 \begin{remark}
 We would like to emphasize that, as evident from the proof, Theorem~C is valid for \emph{any}
 choice of an orbifold structure for $(X,D)$ and not necessarily those that are strict. 
 \end{remark}

\section{Orbifold Miyaoka-Yau inequality for minimal pairs}
\label{section5-MYInequality}

 \subsection{Proof of Theorem~\ref{thm:main-ineq}} 
 Let $H$ be the ample divisor in 
 Proposition~\ref{prop:OrbiLc} and set 
 $\mathcal C_{\underline \alpha(m)}= \{ (U_{\underline \alpha(m)}, f_{\underline \alpha(m)} , X_{\underline \alpha(m)}) \}$ 
 to be the orbi-structure for the klt pair $(X, D_m)$ in codimension two.
 To avoid unnecessarily cumbersome notations we will assume that
 $D- \lfloor D \rfloor =0$.

   \vspace{2 mm}
 \noindent
\textbf{Step 1.} \emph{Restriction to a general surface $S_m$.}
 
 \noindent
 
 Let $f'_m: Y_m \to (X, D_m)$ be a morphism strictly adapted to $D_m$ decomposed 
 as in (\ref{decomp}). 
 By Theorem~\ref{thm:SS}, the sheaf $\Omega^{1}_{(Y_m, f'_m, D_m)}$ is semistable with respect to 
 $K_X+D_m$. As semistability is determined in codimension one, its coherent extension $\Omega^{[1]}_{(Y_m, f'_m, D_m)}$
 is also semistable, the latter being a reflexive sheaf on $Y_m$.
 Now, according to Flenner's restriction theorem (\cite{Flenner84}) ---which generalizes
 Mehta-Ramanathan's theorem to normal varieties---
 for sufficiently large, positive 
 integer $a_m$, there is a complete intersection curve 
 $C_m:=D_1\cap \ldots \cap D_{n-1}$, 
 where $D_i$ are general members of $| a_m\cdot (K_X+D_m) |$ such that 
 $( \Omega^{1}_{(Y_m , f'_m, D_m)}|_{C_{Y_m}} )$ is semistable, where $C_{Y_m}: = (f'_m)^{-1}(C_m)$.

 We observe that as $\Omega^1_{(Y_m, f'_m, D_m)}$ and $\Omega^1_{\mathcal C_{\underline \alpha(m)}}$ are compatible
 in codimension one, 
 thus so are their restrictions $\Omega^1_{(Y_m, f_m, D_m)}|_{C_{Y_m}}$ and $\Omega^1_{\mathcal C_{\underline \alpha(m)}}|_{C_m}$
 \footnote{By the notation $\Omega^1_{\mathcal C_{\underline \alpha(m)}}|_{C_m}$ we mean
 the pullback of $\Omega^1_{\mathcal C_{\underline \alpha(m)}}$ to the orbi-sheaf on $C_m$ with its naturally induced 
 orbifold structure. 
 We refer to \cite[Subsect.~3.6.2]{GKPT15} for more details in the more classical setting
 of $\mathcal Q$-varieties (which readily generalizes to the current context).}.
 Therefore, using Proposition~\ref{prop:IndepSS}, it follows that $\Omega_{\mathcal C_{\underline \alpha(m)}}^{1}|_{C_m}$
 is also semistable.
 In particular,  for $S_m:=D_1\cap \ldots \cap D_{n-2}$, the restriction $\Omega_{\mathcal C_{\underline \alpha(m)}}^{1}|_{S_m}$
 is semistable with respect to the orbifold structure $\mathcal C_{S_m,{\underline \alpha(m)}}$ on $S_m$ naturally induced by 
 the restriction of $\mathcal C_{\underline \alpha(m)}$.\\

  \vspace{2 mm}
 \noindent
 \textbf{Step 2.} \emph{Construction of a stable orbi-Higgs sheaf.}
 
 \noindent
 Let $\O_{\mathcal C_{\underline \alpha(m)}}$ is the orbi-sheaf defined by $\{\O_{X_{\underline \alpha(m)}} \}$.
 Define the orbi-Higgs sheaf by $\E_{\mathcal C_{\underline \alpha(m)}}:= \Omega^{1}_{\mathcal C_{\underline \alpha(m)}} \oplus \O_{\mathcal C_{\underline \alpha(m)}}$
 together with the canonically defined Higgs map $ \theta_{\mathcal C_{\underline \alpha(m)}}: \E_{\mathcal C_{\underline \alpha(m)}} 
 \to \E_{\mathcal C_{\underline \alpha(m)}}\otimes \Omega^1_{\mathcal C_{\underline \alpha(m)}}$ defined by the isomorphism 
 $$
\Omega_{\mathcal C_{\underline \alpha(m)}}^{1} \to \O_{\mathcal C_{\underline \alpha(m)}} \otimes \Omega_{\mathcal C_{\underline \alpha(m)}}^{1}
 $$
 when restricted to the first factor and zero when restricted to $\O_{\mathcal C_{\underline \alpha(m)}}$.

 Now, consider the restriction 
 $$\E_{\mathcal C_{S_m, \underline \alpha_m}}:=\E_{\mathcal C_{m}}|_{S_m}= 
 \Omega_{\mathcal C_{\underline \alpha(m)}}^{1}|_{S_m} \oplus \, \mathcal O_{\mathcal C_{\underline \alpha(m)}}|_{S_m}=
 \Omega_{\mathcal C_{\underline \alpha(m)}}^{1}|_{S_m}  \oplus \, \mathcal O_{\mathcal C_{S_m, \underline \alpha_m}},
 $$
 with the Higgs field $\theta_{\mathcal C_{S_m, \underline \alpha_m}}$ defined by the composition
 \begin{equation*}
 \xymatrix{
  \E_{\mathcal C_{S_m, \underline \alpha_m}} \ar[rr]^(.4){ \theta_{\mathcal C_{\underline \alpha(m)}}|_{S_m}}  &&  \E_{\mathcal C_{S_m, \underline \alpha_m}} \otimes \Omega^1_{\mathcal C_{\underline \alpha(m)}}|_{S_m}
    \ar[r]  &   \E_{\mathcal C_{S_m, \underline \alpha_m}} \otimes \Omega_{\mathcal C_{S_m, \underline \alpha_m}}^1.
 }
 \end{equation*}
As $S_m$ is general and $\E_{\mathcal C_{\underline \alpha(m)}}$ is reflexive, it follows that $\E_{\mathcal C_{S_m, \underline \alpha_m}}$ is locally free. 
(To be clear, $\E_{\mathcal C_{S_m, \underline \alpha_m}}$ is \textit{not} the 
orbi-sheaf $\Omega^{1}_{\mathcal C_{S_m, \underline \alpha_m}} \oplus \O_{\mathcal C_{S_m, \underline \alpha_m}}$.)
 \begin{claim}\label{claim:HiggsStable}
 The locally free orbi-Higgs sheaf $(\E_{\mathcal C_{S_m, \underline \alpha_m}}, \theta_{\mathcal C_{S_m, \underline \alpha_m}})$ is 
 Higgs stable with respect to $(K_X + D_m)|_{S_m}$.
 \end{claim}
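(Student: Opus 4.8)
The plan is to adapt to the surface $S_m$, within the orbifold formalism, the classical Simpson argument for the system of Hodge sheaves $\Omega\oplus\O$ (in the spirit of \cite{MR1179076} and \cite{GKPT15}), feeding in the only two facts available: the $P$-semistability of $W:=\Omega^{[1]}_{(\mathcal C_m,D_m)}|_{S_m}$ just obtained, and the negativity of the conormal of $S_m$. Write $S=S_m$ and $P=(K_X+D_m)|_{S}$, an ample class on $S$ since $K_X+D_m=K_X+D+\tfrac1m H$ is ample, so that $P^2>0$; since $\E_{\mathcal C_{S_m}}=W\oplus\O_S$ has rank $n+1$ with $c_1$ the class of $P$, one has $\mu_P(\E_{\mathcal C_{S_m}})=P^2/(n+1)$ whereas $\mu_P(W)=P^2/n$. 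First I would record the orbifold conormal sequence $0\to N\to W\xrightarrow{\rho}\Omega^1_S\to 0$: its surjectivity holds because $S$, being a general complete intersection of members of $|a_m(K_X+D_m)|$, avoids the codimension $\ge 3$ locus where $\mathcal C_m$ is not defined and meets all the relevant divisors transversally, while $N:=\ker\rho$ is a direct sum of line bundles of negative degree, hence $P$-polystable with $\mu_P(N)<0$. I would also unwind the Higgs field: $\theta_{\mathcal C_{S_m}}$ annihilates the $\O_S$-summand and, on $W$, equals the composite $W\xrightarrow{\rho}\Omega^1_S=\O_S\otimes\Omega^1_S\hookrightarrow\E_{\mathcal C_{S_m}}\otimes\Omega^1_S$.

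Next I would test the Higgs-stability inequality against an arbitrary proper non-zero \emph{saturated} $\theta$-invariant orbi-subsheaf $\F\subsetneq\E_{\mathcal C_{S_m}}$ --- it is enough to treat these, since saturating a $\theta$-invariant subsheaf keeps it $\theta$-invariant and only increases its slope. Set $\G:=\mathrm{pr}_W(\F)\subseteq W$ and $\sH:=\F\cap(0\oplus\O_S)\subseteq\O_S$. A short local computation, legitimate because $\Omega^1_S$ is locally free, should show that $\theta$-invariance of $\F$ amounts to the inclusion $\rho(\G)\subseteq\sH\otimes\Omega^1_S$ inside $\Omega^1_S$; and $\sH$, being torsion-free of rank $\le 1$ and saturated in the line bundle $\O_S$ (because $\F$ is saturated in $\E_{\mathcal C_{S_m}}$), is equal either to $0$ or to $\O_S$. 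Two cases then arise, according to whether $\rho(\G)$ vanishes or not.

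If $\rho(\G)=0$, then $\G\subseteq N$ and the sequence $0\to\sH\to\F\to\G\to 0$ is exact; since $\deg_P\sH\le 0$ and $N$ is $P$-polystable of negative slope, one gets $\deg_P\F\le 0$, hence $\mu_P(\F)\le 0<\mu_P(\E_{\mathcal C_{S_m}})$. If $\rho(\G)\neq 0$, then necessarily $\sH\neq 0$, so $\sH=\O_S$, i.e.\ $0\oplus\O_S\subseteq\F$, whence $\F=\G\oplus\O_S$ with $\G$ saturated in $W$ (again by saturatedness of $\F$) and $\G\neq W$ (otherwise $\F=\E_{\mathcal C_{S_m}}$), so that $g:=\rank(\G)\le n-1$. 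The $P$-semistability of $W$ then yields $\deg_P\G\le g\,P^2/n$, and therefore $\mu_P(\F)=\deg_P(\G)/(g+1)\le g\,P^2/\bigl(n(g+1)\bigr)<P^2/(n+1)=\mu_P(\E_{\mathcal C_{S_m}})$, the final inequality being equivalent to $g<n$. In both cases the slope inequality is strict, which is precisely the Higgs stability of $(\E_{\mathcal C_{S_m}},\theta_{\mathcal C_{S_m}})$ with respect to $(K_X+D_m)|_{S_m}$.

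The main --- if modest --- obstacle I anticipate is twofold. On the technical side, one must check that the entire discussion genuinely lives among orbi-sheaves: that $\mathrm{pr}_W$, the intersection $\F\cap(0\oplus\O_S)$, saturation, and the conormal sequence are all available $G_\alpha$-equivariantly on the charts of $\mathcal C_{S_m}$; equivalently, one can pass to the Mumford cover attached to $\mathcal C_{S_m}$ and run the argument there, using the formalism of Sections~\ref{section2-Prelim}--\ref{section3-compatibility}. More substantively, the numerical inequality in the second case is tight at $g=n-1$: it closes only because $W$ appears as a corank-one \emph{quotient} of $\E_{\mathcal C_{S_m}}$, so that the extra trivial summand lowers the comparison slope from $P^2/n$ to $P^2/(n+1)$ by exactly the needed amount. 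The destabilizing candidates $\G\oplus\O_S$ with $\G$ a maximal proper subsheaf of $W$ are thus the delicate ones and deserve to be handled with care.
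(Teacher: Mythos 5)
Your argument is correct and, at bottom, follows the same strategy as the paper's: split a $\theta$-invariant subsheaf along the projection to the $\O$-summand, use the semistability of $W=\Omega^{[1]}_{(\mathcal C_m,D_m)}|_{S_m}$ on the complementary piece, and conclude by the rank count $\tfrac{g}{n(g+1)}<\tfrac{1}{n+1}$ for $g\le n-1$. The genuine difference is your first case, and it matters. The paper disposes of subsheaves contained in $W\oplus 0$ by asserting that $W$ has no $\theta$-invariant subsheaves; but since $\theta|_{W}$ factors through the restriction map $\rho:W\to\Omega^{1}_{\mathcal C_{S_m}}$, every subsheaf of $N=\ker\rho$ --- a sheaf of rank $n-2$, hence nonzero as soon as $n\ge 3$ --- is $\theta$-invariant, so that assertion does not hold as stated; moreover, for such subsheaves the paper's slope computation (which relies on $\rank \G=\rank\F-1$) does not apply, and semistability of $W$ alone only yields $\mu_P(\F)\le P^2/n$, which is not smaller than $P^2/(n+1)$. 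Your identification of $N$ as a sum of restrictions of $\O(-a_m(K_X+D_m))$, hence of negative $P$-degree, is exactly the input needed to close that case (this is also how \cite{GKPT15} handles the corresponding step), so your proof is in fact more complete than the one in the text. The only points left to nail down are the ones you flag yourself: the orbifold conormal sequence on a general complete intersection surface (transversality to the boundary and avoidance of the codimension-three bad locus suffice), and the $G_\alpha$-equivariance of projection, intersection and saturation on the charts, which is harmless and can be checked on the Mumford cover.
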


\begin{proof}[Proof of Claim~\ref{claim:HiggsStable}]
 Let $\F_{\mathcal C_{S_m, \underline \alpha_m}} \subset \E_{\mathcal C_{S_m, \underline \alpha_m}}$ be an orbi-Higgs subsheaf. 
 As there are no non-trivial subsheaves of $(\Omega_{\mathcal C_{m}}^{1})|_{S_m}$
 that are invariant under the Higgs operator $\theta_{\mathcal C_{S_m, \underline \alpha_m}}$, the orbi-sheaf 
 $\F_{\mathcal C_{S_m, \underline \alpha_m}}$ must have a non-trivial projection onto an  orbi-subsheaf 
 $\sL_{\mathcal C_{S_m, \underline \alpha_m}}$ of $\O_{\mathcal C_{S_m, \underline \alpha_m}}$. Let 
 \begin{equation}\label{eq:HiggsExact}
 \xymatrix{
    0 \ar[r]  & \G_{\mathcal C_{S_m, \underline \alpha_m}}  \ar[r]  & \F_{\mathcal C_{S_m, \underline \alpha_m}}  \ar[r] & \sL_{\mathcal C_{S_m, \underline \alpha_m}} \
    \ar[r] & 0
  }
 \end{equation}
 be the resulting exact sequence of orbi-sheaves, where $\G_{\mathcal C_{S_m, \underline \alpha_m}}$ is a 
 locally free subsheaf of ${\Omega_{\mathcal C_{\underline \alpha_m}}^{1}}|_{S_m}$.
 The rest of our arguments are now very similar to those in~\cite[\S 7]{GKPT15}. 
 From~\eqref{eq:HiggsExact} and the inclusion $\sL_{\mathcal C_{S_m, \underline \alpha_m}} \subset \O_{\mathcal C_{S_m, \underline \alpha_m}}$
 it follows that 
 \begin{equation}\label{eq:SmallerChern}
  c_1(\F_{\mathcal C_{S_m, \underline \alpha_m}})  \leq c_1(\G_{\mathcal C_{S_m, \underline \alpha_m}}).
 \end{equation}
 Let $r : = \rank(\F_{\mathcal C_{\underline \alpha(m)}})$. By dividing the two sides of~\eqref{eq:SmallerChern} by $(r-1)$ and using the semistability of $\Omega^{1}_{\mathcal C_{\underline \alpha m}}|_{S_m}$, we find that 
 
 \begin{align*}
 \mu(\F_{\mathcal C_{S_m, \underline \alpha_m}}) &= \mu(\G_{\mathcal C_{S_m, \underline \alpha_m}}) \cdot \frac{r-1}{r}\\
        & \leq  \frac{1}{n} ( (K_X+D_m)|_{S_m} )^2 \cdot \frac{r-1}{r} 
                         \;\;  \;\;\;  \text{by the semistability of $\Omega^1_{\mathcal C_{\underline \alpha_m}}|_{S_m}$}  \\
        & <  \frac{1}{n+1} ( (K_X+D_m)|_{S_m} )^2    \\
        &= \mu \big(  \Omega^{1}_{\mathcal C_{\underline \alpha_m}}|_{S_m} \oplus \O_{\mathcal C_{S_m, \underline \alpha_m}}  \bigr),
  \end{align*}
 as required (here $\mu$ is the slope with respect to $K_X+D_m$).
 \end{proof}
 
 \vspace{2 mm}
 \noindent 
 \textbf{Step 3. }\emph{The Miyaoka-Yau inequality.} 
 
 \noindent
 Let $g_m: \widehat S_m \to S_m$ be the global Mumford cover associated to the orbi-structure 
 $\mathcal C_{S_m, \underline \alpha_m}$ on $S_m$ and $G_m:=\Gal (\widehat S_m/ S_m)$. 
Following Subsection~\ref{subsect:GlobalCover}, let $\widehat \E_{\mathcal C_{S_m, \underline \alpha_m}}$ be the locally free $G_m$-sheaf on $\widehat S_m$ associated with $\E_{\mathcal C_{S_m, \underline \alpha_m}}$. 
 It comes naturally equipped with a Higgs field $ \widehat \theta_{\mathcal C_{S_m, \underline \alpha_m}}$ such that  
 $$
 \widehat \theta_{\mathcal C_{S_m, \underline \alpha_m}}: \widehat \E_{\mathcal C_{S_m, \underline \alpha_m}} \to \widehat \E_{\mathcal C_{S_m, \underline \alpha_m}}\otimes \sW_m,
 $$
 where $\sW_m \subset \Omega^{[1]}_{\widehat S_m}$ is a locally free subsheaf.  
 Set $H_{m}:= (K_X+D_m)|_{S_m}$ and $\widehat H_{m}:=(g_m)^*H_{m}$.

 \begin{claim}\label{claim:GStable}
 For an equivariant resolution $\pi_m : \widetilde S_m \to \widehat S_m$, there exists an ample divisor 
 $\widetilde H_m\subset \widetilde S_m$ such that
 the locally free Higgs sheaf 
 $$
 (\widetilde \E_{\mathcal C_{S_m, \underline \alpha_m}}, \widetilde \theta_{\mathcal C_{S_m, \underline \alpha_m}}): = (\pi_m)^*(\widehat \E_{\mathcal C_{S_m, \underline \alpha_m}}
 , \widehat \theta_{\mathcal C_{S_m, \underline \alpha_m}})
 $$
is \emph{Higgs $G_m$-stable} with respect to $\widetilde H_m$, that is for the Higgs, $G_m$-subsheaf 
 $\widetilde \F_{\mathcal C_{S_m, \underline \alpha_m}}\subset \widetilde \E_{\mathcal C_{S_m, \underline \alpha_m}}$, 
 we have $\mu_{\widetilde H_m}(\widetilde \F_{\mathcal C_{S_m, \underline \alpha_m}})<
 \mu_{\widetilde H_m}(\widetilde \E_{\mathcal C_{S_m, \underline \alpha_m}})$. Moreover, one can 
 arrange that $(\pi_{m})_*\widetilde H_m=\widehat H_m$, as $1$-cycles on $\widehat S_m$.  
 \end{claim}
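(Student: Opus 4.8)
\medskip
\noindent\emph{Proof proposal for Claim~\ref{claim:GStable}.}
The plan is to run the standard ``transfer of stability to a resolution'' argument. First I would show that the pulled-back Higgs bundle $\widetilde\E_{\mathcal C_{S_m}}=\pi_m^*\widehat\E_{\mathcal C_{S_m}}$ is already Higgs $G_m$-stable with respect to the nef and big class $P:=\pi_m^*\widehat H_{S_m}$, and then perturb $P$ inside the ample cone of $\widetilde S_m$ by an arbitrarily small amount, using a \emph{uniform} slope gap to preserve stability. Note that everything takes place on the smooth projective \emph{surface} $\widetilde S_m$, so slopes are just degrees against a divisor and the Harder--Narasimhan formalism is at hand.

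The first step is the descent between $\widetilde S_m$ and $\widehat S_m$. Since $\pi_m$ is an equivariant resolution and $\widehat\E_{\mathcal C_{S_m}}$ is locally free, one has $(\pi_m)_*\widetilde\E_{\mathcal C_{S_m}}=\widehat\E_{\mathcal C_{S_m}}$; hence every saturated $G_m$-Higgs subsheaf $\widetilde\F\subsetneq\widetilde\E_{\mathcal C_{S_m}}$ yields a reflexive $G_m$-Higgs subsheaf $\widehat\F:=\bigl((\pi_m)_*\widetilde\F\bigr)^{**}\subseteq\widehat\E_{\mathcal C_{S_m}}$ of the same rank, and $\widetilde\F$ is the saturation in $\widetilde\E_{\mathcal C_{S_m}}$ of the image of $\pi_m^*\widehat\F$. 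Consequently $c_1(\widetilde\F)-\pi_m^*c_1(\widehat\F)$ is supported on the exceptional locus of $\pi_m$, so by the projection formula $\mu_P(\widetilde\F)=\mu_{\widehat H_{S_m}}(\widehat\F)$ and $\mu_P(\widetilde\E_{\mathcal C_{S_m}})=\mu_{\widehat H_{S_m}}(\widehat\E_{\mathcal C_{S_m}})$.

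The main point — and the step I expect to require the most care, because the family of subsheaves in play is not bounded — is to extract a gap $\epsilon_0>0$ with
\[
\mu_P(\widetilde\E_{\mathcal C_{S_m}})-\mu_P(\widetilde\F)\ \ge\ \epsilon_0
\]
for \emph{every} proper saturated $G_m$-Higgs subsheaf $\widetilde\F$. For this one uses that, by Claim~\ref{claim:HiggsStable}, $\widehat\E_{\mathcal C_{S_m}}$ is Higgs \emph{stable} with respect to the genuinely ample divisor $\widehat H_{S_m}=\gamma_m^*\bigl((K_X+D_m)|_{S_m}\bigr)$: on the fixed polarized surface $\widehat S_m$ the $\widehat H_{S_m}$-slopes of saturated subsheaves of $\widehat\E_{\mathcal C_{S_m}}$ range over a discrete subset of $\Q$ that is bounded above (as $\det\widehat\F$ embeds into an exterior power of $\widehat\E_{\mathcal C_{S_m}}$, whose maximal slope is finite), so stability forces the supremum of the slopes of \emph{proper} saturated subsheaves — which is attained — to be strictly less than $\mu_{\widehat H_{S_m}}(\widehat\E_{\mathcal C_{S_m}})$; transporting this through the identities of the previous step yields the gap on $\widetilde S_m$.

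Finally I would conclude by perturbing the polarization. Choose a $G_m$-linearized ample divisor $A$ on $\widetilde S_m$ (average an arbitrary ample divisor over $G_m$) and set $\widetilde H_m:=P+\delta A$, which is ample for every $\delta>0$ since $P$ is nef. If some proper saturated $G_m$-Higgs subsheaf $\widetilde\F$ destabilized $\widetilde\E_{\mathcal C_{S_m}}$ with respect to $\widetilde H_m$, one would get
\[
\delta\Bigl(\tfrac{1}{\rank\widetilde\F}\,c_1(\widetilde\F)\cdot A-\tfrac{1}{\rank\widetilde\E_{\mathcal C_{S_m}}}\,c_1(\widetilde\E_{\mathcal C_{S_m}})\cdot A\Bigr)\ \ge\ \mu_P(\widetilde\E_{\mathcal C_{S_m}})-\mu_P(\widetilde\F)\ \ge\ \epsilon_0 ,
\]
while $\tfrac{1}{\rank\widetilde\F}\,c_1(\widetilde\F)\cdot A\le\mu_{A,\max}(\widetilde\E_{\mathcal C_{S_m}})<\infty$ by Harder--Narasimhan with respect to $A$; hence the left-hand side is at most $\delta\,C$ for a constant $C$ depending only on $\widetilde\E_{\mathcal C_{S_m}}$ and $A$, forcing $\delta\ge\epsilon_0/C$. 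Therefore any (rational) $\delta$ with $0<\delta<\epsilon_0/\max(C,1)$ does the job, and, after clearing denominators, $\widetilde H_m$ makes $(\widetilde\E_{\mathcal C_{S_m}},\widetilde\theta_{\mathcal C_{S_m}})$ Higgs $G_m$-stable, as required. Alternatively, one could invoke the analogous perturbation lemma in \cite[\S 7]{GKPT15}.
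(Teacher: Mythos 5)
Your proposal is correct and follows the same two-stage architecture as the paper: first transfer the $G_m$-stability of $\widehat\E_{\mathcal C_{S_m}}$ (obtained from the orbi-Higgs stability of Claim~\ref{claim:HiggsStable} via descent of $G_m$-subsheaves) up to the resolution with respect to the nef and big class $\pi_m^*\widehat H_{S_m}$, and then perturb into the ample cone. The difference is in how the perturbation is justified. The paper subtracts a small multiple of a relatively anti-ample exceptional divisor and simply invokes ``stability is an open condition''; since the starting polarization lies on the boundary of the ample cone and the family of potential destabilizers is a priori unbounded, this openness is exactly what needs an argument, and your proof supplies it: the uniform slope gap $\epsilon_0$ (discreteness and boundedness above of slopes of saturated subsheaves on the fixed polarized surface $\widehat S_m$, transported to $\widetilde S_m$ by the projection formula since $c_1(\widetilde\F)-\pi_m^*c_1(\widehat\F)$ is exceptional), together with the bound $\mu_{A,\max}(\widetilde\E_{\mathcal C_{S_m}})<\infty$ from the Harder--Narasimhan filtration for the auxiliary ample class $A$. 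Your perturbation direction ($P+\delta A$ rather than $\pi^*H_m-\epsilon E$) is immaterial. So your write-up is, if anything, more complete than the paper's at the one delicate point.

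One small imprecision to fix: in the gap paragraph you say that stability forces the supremum of the slopes of \emph{all} proper saturated subsheaves to be strictly below $\mu_{\widehat H_{S_m}}(\widehat\E_{\mathcal C_{S_m}})$. Read literally this is false here, because $\widehat\E_{\mathcal C_{S_m}}=\widehat\Omega\oplus\widehat{\mathcal O}$ is \emph{not} slope-semistable as a plain sheaf (the summand $\widehat\Omega$ has strictly larger slope); it is only the $\theta$-invariant $G_m$-subsheaves that are controlled by Claim~\ref{claim:HiggsStable}. The fix is cosmetic: the set of slopes of proper saturated $G_m$-Higgs subsheaves is a subset of the same discrete, bounded-above set, so its supremum (if the set is nonempty) is attained and is strictly less than $\mu_{\widehat H_{S_m}}(\widehat\E_{\mathcal C_{S_m}})$ by Higgs $G_m$-stability; the rest of your argument goes through unchanged, provided you also note that the pushforward-and-saturate operation preserves $\theta$-invariance and $G_m$-equivariance (both are conditions determined at the generic point for saturated subsheaves, so this is automatic).
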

 
\begin{proof}[Proof of Claim~\ref{claim:GStable}] 
  First, we notice that from the orbi-Higgs stability of $(\E_{\mathcal C_{S_m, \underline \alpha_m}}, \theta_{\mathcal C_{S_m, \underline \alpha_m}})$ 
  it follows that the 
  locally free Higgs sheaf $(\widehat \E_{\mathcal C_{S_m, \underline \alpha_m}},\widehat \theta_{\mathcal C_{S_m, \underline \alpha_m}})$ 
  is $G_m$-stable, i.e. 
  \begin{equation}\label{eq:GStable}
  \mu_{\widehat H_m} (\widehat \F_{\mathcal C_{S_m, \underline \alpha_m}}) < \mu_{\widehat H_m}(\widehat \E_{\mathcal C_{S_m, \underline \alpha_m}})
  \end{equation}
  for every Higgs, $G_m$-subsheaf $\widehat \F_{\mathcal C_{S_m, \underline \alpha_m}} \subset \widehat \E_{\mathcal C_{S_m, \underline \alpha_m}}$. 
  This is because, every such subsheaf (which we may assume to be saturated) descends to an orbi-Higgs subsheaf
  $(\F_{\mathcal C_{S_m, \underline \alpha_m}}, \theta_{\mathcal C_{S_m, \underline \alpha_m}}) \subset (\E_{\mathcal C_{S_m, \underline \alpha_m}}, \theta_{\mathcal C_{S_m, \underline \alpha_m}})$ on $S_m$, 
  i.e. $\widehat \F_{\mathcal C_{S_m, \underline \alpha_m}} = q_{\alpha_m}^{*}(\F_{\mathcal C_{S_m, \underline \alpha_m}})$, cf.~\cite[Thm.~4.2.15]{HL10}.
  Here, the morphism $q_{\underline\alpha_m}: \widehat S_{\underline\alpha_m}\to S_{\alpha_m}$ 
  factors $g_m$: 
  
  $$
 \begin{tikzcd}
 \widehat S_{\mathcal C_{S_m, \underline \alpha_m}} \arrow{rrrr}{g_m}
 &&&& S_m \\
 \widehat S_{\underline\alpha_m} \arrow[hookrightarrow]{u} \ar{rr}{q_{\underline \alpha_m}}
 && S_{\underline \alpha_m} \ar{rr}{p_{\underline \alpha_m}}
 &&  V_{\underline \alpha_m}  \arrow[hookrightarrow]{u},
 \end{tikzcd}
 $$
 as defined in Subsection~\ref{subsect:GlobalCover}. Therefore the orbi-stability 
 of $(\E_{\mathcal C_{S_m, \underline \alpha_m}}, \theta_{\mathcal C_{S_m, \underline \alpha_m}})$ ensures that Inequality~\ref{eq:GStable} holds. 
 As a result, the pull-back Higgs bundle $(\widetilde \E_{\mathcal C_{S_m, \underline \alpha_m}}, \widetilde\theta_{\mathcal C_{S_m, \underline \alpha_m}})$ 
 is $G_m$-stable with respect to $\pi_m^*\widehat H_m$.
   
 Now, let $E$ be an effective exceptional divisor that is relatively anti-ample. As stability 
 is an open condition, it follows that for any sufficiently small $\epsilon \in \Q^+$ we can 
 guarantee that $\widetilde H_m: =( \pi_m^*\widehat H_m - \epsilon \cdot E)$ is ample and that 
 $(\widetilde \E_{\mathcal C_{S_m, \underline \alpha_m}}, \widetilde \theta_{\mathcal C_{S_m, \underline \alpha_m}})$ is Higgs stable 
 with respect $\widetilde H_m$. This finishes the proof of Claim~\ref{claim:GStable}.
 \end{proof}
 
 From the original result of Simpson on the existence of HYM metrics, cf.~\cite[Prop.~3.4]{Simpson}, 
 it thus follows that $\widetilde \E_{\mathcal C_{S_m, \underline \alpha_m}}$ verifies the Bogomolov-Gieseker inequality. 
 Hence, by projection formula, so does $\widehat \E_{\mathcal C_{S_m, \underline \alpha_m}}$  
 and thus the inequality 
 \begin{equation}\label{eq:Last}
 \bigl( 2(n+1)\cdot c_2(\Omega_{(\mathcal C_{\underline \alpha(m)}, D_m)}^{1}) -
            n \cdot c_1^2(\Omega_{(\mathcal C_{\underline \alpha(m)}, D_m)}^{1} ) \bigr)
                \cdot (K_X+ D+ \frac{1}{m}\cdot H)^{n-2}\geq 0
 \end{equation}
 holds for all $m\geq 2$.
 The inequality in Theorem~\ref{thm:main-ineq} now immediately follows from (\ref{eq:Last})
 by taking the limit $m\to \infty$, using the continuity property in Proposition~\ref{prop:continuity}. 
 
  \

\subsection{About the assumptions on the singularities}
\label{ssect:extension}
\

A more general setting to prove Miyaoka-Yau inequality would be the one for pairs $(X,D)$ such that  $(X,D)$ has log canonical singularities with $K_X+D$ nef. 
As mentioned in the Subsection~\ref{examples}, this context is too general for a workable definition of an orbifold second Chern class $c_2(X,D)$, even if the space $X$ is assumed to be klt. 

Looking carefully at the proof of Theorem~\ref{thm:main-ineq}, one may observe that what we really need is the existence of an orbi-étale structure in codimension two for $(X,D)$ as well as for $(X,D_m)$, cf. Proposition~\ref{prop:OrbiLc}. Both of these conditions are satisfied in two particular cases: if  $(X,D)$ is log smooth or if $X$ klt and $D$ is reduced.
\begin{enumerate}
\item[$\ast$] Assume that $(X,D)$ is log smooth. Then the existence of an orbi-étale structure on the whole $X$ follows from Example~\ref{ex:OrbiSmooth}. In particular, in view of Example~\ref{loglisse}, one has for any log smooth, log canonical pair $(X,D)$ such that $K_X+D$ is nef and big, say:
 $$    \bigl( 2(n+1)\cdot c_2(X,D) - n\cdot c_1^2(X,D) \bigr) \cdot (K_X+D)^{n-2} \geq 0$$
 where $c_2(X,D)=c_2(\Omega_X)+ c_1(\Omega_X)\cdotp D+\sum_{i<j}\big(1-\frac{b_i}{a_i}\big)\big(1-\frac{b_j}{a_j}\big)D_i\cdotp D_j + \sum_i\big(1-\frac{b_i}{a_i}\big)D_i^2$. This generalizes \cite[Prop.~4.2 and 4.5]{SW}.
\item[$\ast \ast$] Assume that $X$ is klt and that $D$ is reduced. The main point is to obtain \cite[Prop. 9.14]{GKKP} in this setting. This can be checked using the classification of pairs $(S,C)$ where $S$ is a klt surface and $C$ a reduced curve with $(S,C)$ lc, cf e.g. \cite[Chapt.~10 p.117]{Kollar92} combined with the reduction argument given in \cite[Sect.~9]{GKKP}.
\end{enumerate}


\section{Appendix. Products of orbifold Chern classes}\label{append}
We will be following the notations and settings of Section~\ref{section2-Prelim}, in particular those 
of Subsection~\ref{subsect:OrbiCherns}.
Our aim is to prove that the natural map 
$$
\psi_{\bullet} : \A^{\bullet}(\widehat X^\circ_{\mathcal C})^G \otimes \Q  \to  \A_{n-\bullet} (X^\circ) \otimes \Q
$$
can be used to equip $\A(X^\circ)$ with a ring structure. This can be achieved, as in~\cite{MR717614}
in the case of $\mathcal Q$-varieties, by using the following lemma.

\begin{lemma}
Assuming that $\mathcal C = \{ (U^\circ_{\alpha}, f_{\alpha} , X_{\alpha}^\circ) \}$ verifies Assumption~\ref{assume}, the map $\psi_{\bullet}$ is a group isomorphism.
\end{lemma}

\begin{proof}
The proof follows from the arguments of~\cite[Thm. 3.1]{MR717614}. 
Let $Z^\circ\in \A_{n-k}(X^\circ)$. We first show that $Z^\circ$ naturally gives rise
to an orbifold sheaf. 

Consider $(p_{\alpha}')^{-1}(Z)$ as a reduced subscheme of $V_{\alpha}$.

\begin{claim}\label{claim:ring}
The collection of $G_{\alpha}$-sheaves 
$$
\big\{ p_{\alpha}^*\big( \O_{(p_{\alpha}')^{-1}(Z^\circ)} \big)  \big\}
$$
is an orbifold sheaf.
\end{claim}

\noindent
\emph{Proof of Claim~\ref{claim:ring}.} It suffices to establish the compatibility 
condition for overlaps. To this end, consider another local orbifold chart

$$
\xymatrix{
X^\circ_{\beta}  \ar[rr]^{p_{\beta}}_{\text{adapted}}  && W^\circ_{\beta} \ar[rr]^{p'_{\beta}}_{\text{quasi-\'etale}}
&& X^\circ,
}
$$
where $W_{\beta}^\circ$ is smooth, with the resulting commutative diagram 
of fibre products:

$$
  \xymatrix{
   X^\circ_{\alpha\beta} \ar[rrrrrr]^{f_{\beta\alpha}} \ar[dd]_{f_{\alpha\beta}} \ar[drrrr]^{g_{\alpha\beta}} 
  &&&&&&  X^\circ_{\beta} \ar[d]^{p_{\beta}} \\
  &&&& W^\circ_{\alpha\beta} \ar[rr]^{p'_{\beta\alpha}} \ar[d]^{p'_{\alpha\beta}}  && W^\circ_{\beta} \ar[d]^{p'_{\beta}}  \\
  X^\circ_{\alpha} \ar[rrrr]^{p_{\alpha}}  &&&& W^\circ_{\alpha} \ar[rr]^{p'_{\alpha}} && X^\circ
.}
$$
Here, $W^\circ_{\alpha\beta} = W^\circ_{\alpha}\times_{X^\circ} W_{\beta}^\circ$ and $X^\circ_{\alpha\beta}$
is the normalization of $X^\circ_{\alpha}\times_{X^\circ} X^\circ_{\beta}$ with $g_{\alpha\beta}$
being the morphism induced by the universal property of fibre products. 

Now, since both $W_{\alpha}^\circ$ and $W^\circ_{\beta}$ are smooth and $p'_{\beta\alpha}$
and $p'_{\alpha\beta}$ are quasi-\'etale, it follows from the purity of branch locus that 
$p'_{\alpha\beta}$ and $p'_{\beta\alpha}$ are in fact \'etale. 
As a result we have
$$
(p'_{\alpha\beta})^*\big(  \O_{(p'_{\alpha})^{-1}(Z^\circ)}\big)     \cong    (p'_{\beta\alpha})^* 
\big( \O_{(p_{\beta}')^{-1}(Z^\circ)}  \big).
$$

On the other hand, from the commutativity of the diagram we find:

\begin{equation}\label{eq:firstEQ}
f^*_{\alpha\beta} \big(  p_{\alpha}^* \O_{(p_{\alpha}')^{-1}(Z^\circ)} \big) =  g_{\alpha\beta}^*\Big( 
  (p_{\alpha\beta}')^* \O_{(p_{\alpha}')^{-1}(Z^\circ)} \Big),
\end{equation}

\begin{equation}\label{eq:secondEQ}
f_{\beta\alpha}^*\big(  p_{\beta}^* \O_{(p_{\beta}')^{-1}(Z^\circ)}  \big)  =   g_{\alpha\beta}^*\Big( (p_{\beta\alpha}')^*
\O_{(p_{\beta}')^{-1}(Z^\circ)} \Big).
\end{equation}

\noindent
As the right hand side of \eqref{eq:firstEQ} and \eqref{eq:secondEQ} are isomorphic, so are the sheaves 
on the left hand side, as required. Compatibility over triple overlaps follow similarly. 
This proves the claim. \qed

\smallskip

The rest of the proof is now identical to the arguments of Mumford, cf.~\cite[p.~287]{MR717614}.
\end{proof}


 
\bibliographystyle{smfalpha}
\bibliography{biblio}

\end{document}